\documentclass[a4paper, 11pt]{article}
\usepackage{graphicx} 
\usepackage[margin=2.5cm]{geometry}
\usepackage{url}
\usepackage{xcolor}
\usepackage{amsmath,amssymb, amsthm}
\usepackage{authblk}
\usepackage{xspace}
\usepackage[T1]{fontenc}
\usepackage{hidde-standard-preamble}

\newcommand{\threelistcol}{\textsc{list}-$3$-\textsc{colouring}\xspace}
\newcommand{\yes}{\textsc{yes}}

\DeclareEmphSequence{\itshape\color{blue}}

\title{Faster 3-colouring algorithm for graphs of diameter 3}
\author[1]{Carla Groenland\thanks{Research supported by the Dutch Research Council (NWO, VI.Veni.232.073).}}
\author[2]{Hidde Koerts}
\author[2]{Sophie Spirkl\thanks{We acknowledge the support of the Natural Sciences and Engineering Research Council of Canada (NSERC), [funding reference number RGPIN-2020-03912].
    Cette recherche a \'et\'e financ\'ee par le Conseil de recherches en sciences naturelles et en g\'enie du Canada (CRSNG), [num\'ero de r\'ef\'erence RGPIN-2020-03912]. This project was funded in part by the Government of Ontario. This research was conducted while Spirkl was an Alfred P. Sloan Fellow.}}

\affil[1]{Delft Institute of Applied Mathematics, Technische Universiteit Delft, The Netherlands}
\affil[2]{Department of Combinatorics and Optimization, University of Waterloo, Canada}

\begin{document}
	\maketitle
    
    \begin{abstract}
        We show that given an $n$-vertex graph $G$ of diameter 3 we can decide if $G$ is $3$-colourable in time $2^{O(n^{2/3-\varepsilon})}$ for any $\varepsilon < 1/33$. This improves on the previous best algorithm of $2^{O((n\log n)^{2/3})}$ from Dębski, Piecyk and Rzążewski [Faster 3-coloring of small-diameter graphs, \textit{ESA 2021}].
    \end{abstract}

	\section{Introduction}

    In the past two decades, much attention has been given to when central NP-hard algorithmic problems on graphs can be solved faster when restrictions are placed on the input graph. For example, after a sequence of works~\cite{P7paper,4colouringp6free,subexp,Hoang10eaP5,Holyer81,Huang16,Leven83Galil,quasipoly}, 
    the complexity of \textsc{$k$-colouring} has been determined on $H$-free graphs for all connected $H$, with the remaining open case(s) being $k=3$ and $H=P_t$ a path on $t\geq 8$ vertices.    

    For \textsc{3-colouring}, a natural algorithmic approach (which is also the one used for the first subexponential time algorithm for $P_t$-free graphs) is to `branch' for well-chosen vertices on the colour that is assigned to them. This can be formalised by first generalising \textsc{3-colouring} to \textsc{list-3-colouring}, where lists $L(v)\subseteq \{1,2,3\}$ are given for each vertex $v$ of the input graph and we need to decide whether the graph admits a proper colouring that assigns each vertex a colour from its list. Often problems are restricted to hereditary classes (that is, those closed under taking induced subgraphs) which is convenient since `branching' on a colour keeps the graph within the class. This makes a recursive argument possible.

    An example of a natural restriction which does not lead to a hereditary graph class, is bounding the diameter of the input graph. This setting has been explored for many problems, such as problems related to various colouring variants~\cite{brause2022acyclic,piecyk2024c}, feedback vertex sets~\cite{feedbackvertexsets}, partitioning~\cite{partitioning}, and dominating sets~\cite{BouquetDelbotPicouleauDomSetBddDiam}. For $3$-colouring specifically, various authors have considered the setting of bounding the diameter, sometimes in combination with other constraints~\cite{campos2021coloring,DebskiPiecykRzazewski,MartinPaulusmaSmith3colclawfreegirthbdddiam, MARTIN2022150, martin2019colouring, MertziosSpirakis16}.
    Many problems remain NP-hard when adding a restriction on the diameter: for example, there is an easy reduction from \textsc{3-colouring} to \textsc{4-colouring} graphs of diameter~$2$ (by adding a universal vertex to the input graph). 
    In particular, assuming the Exponential Time Hypothesis (ETH), there is no subexponential-time algorithm for $k$-colouring diameter-$2$ graphs for any $k\geq 4$. Since \textsc{2-colouring} is polynomial-time solvable via reduction to $2$-\textsc{SAT}~\cite{2satforcolouring}, this reduction does not work for $k=3$.
    In fact, $3$-colouring can be solved in subexponential time on diameter-$2$ graphs and diameter-$3$ graphs, though not on diameter-4 graphs assuming ETH~\cite{DebskiPiecykRzazewski,MertziosSpirakis16}.

    In this paper, we study \textsc{3-colouring} on graphs of diameter~$3$. Mertzios and  Spirakis~\cite{MertziosSpirakis16} proved that under ETH, this problem cannot be solved in time $2^{o(\sqrt {n})}$ where $n$ denotes the number of vertices of the input graph, whereas Dębski, Piecyk and Rzążewski~\cite{DebskiPiecykRzazewski} provided an algorithm running in time $2^{O((n\log n)^{2/3})}$. This leaves the question of whether one of these two bounds may be close to tight. We show the upper bound is not tight, providing a slightly faster algorithm.

    \begin{theorem}\label{thm:main-theorem}
        \textsc{list-3-colouring} can be solved in time $2^{O(n^{2/3-\varepsilon})}$ for any fixed $\varepsilon < 1/33$ on the class of $n$-vertex graphs of diameter $3$.     
    \end{theorem}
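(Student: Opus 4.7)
The plan is to adapt and sharpen the algorithm of Dębski, Piecyk and Rzążewski (DPR). Their framework reduces the problem to \textsc{list-$2$-colouring} (which is polynomial-time solvable via $2$-SAT) by guessing the colours of a well-chosen set $S$ of vertices; the running time is $3^{|S|}\cdot\mathrm{poly}(n)$, so the target $2^{O(n^{2/3-\varepsilon})}$ reduces to constructing $S$ of size $O(n^{2/3-\varepsilon})$.

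I would first pick an arbitrary vertex $v$ and branch on its three possible colours. Writing $V_i := N_i(v)$ for $i \in \{1,2,3\}$, this means every vertex of $V_1$ ends up with list of size at most $2$; the hard list-size-$3$ vertices are exactly $V_2 \cup V_3$. The diameter-$3$ hypothesis guarantees that $V_1$ dominates $V_2$ and $V_2$ dominates $V_3$, so it suffices to take $S = S_1 \cup S_2$ where $S_1 \subseteq V_1$ dominates $V_2$ and $S_2 \subseteq V_2$ dominates $V_3$. After branching on the $3^{|S|}$ colourings of $S$, every remaining list-size-$3$ vertex acquires a coloured neighbour, its list shrinks to size $2$, and the residual problem is $2$-SAT.

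The construction of $S$ proceeds by a high-degree/low-degree dichotomy at a threshold~$t$: iteratively add to $S_1$ any vertex of $V_1$ with at least $t$ uncovered neighbours in $V_2$, and analogously for $S_2$. In at most $|V_2|/t$ (resp.\ $|V_3|/t$) steps the greedy process bottoms out, leaving a \emph{residual} of vertices whose degree into the next layer is less than $t$. The DPR bound $(n\log n)^{2/3}$ arises from a coarse handling of this residual; my improvement would exploit the diameter-$3$ constraint more tightly: two uncovered low-degree residual vertices must be joined by a path of length at most $3$, and examining where such a path can go (forcing an edge either inside $V_1$, between $V_2$-residual and $V_3$-residual, or through an additional dominating vertex of high degree) yields extra sparsity in the relevant bipartite graph. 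That sparsity, combined with the greedy bound, lets the residual be covered by an additional branching of total size $O(n^{2/3-\varepsilon})$ for $\varepsilon<1/33$.

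The main obstacle is the tight numerical optimisation. One has to balance four exponents simultaneously — the sizes of $S_1$ and $S_2$, the threshold $t$, and the extra branching needed on the residual — whose optimum under the structural inequalities produced by the diameter-$3$ analysis is what determines the specific constant $1/33$. Setting up this mini linear program in the exponents, and verifying that the diameter-$3$ residual lemma is robust enough to be applicable after an adversarial choice of $v$ (possibly re-choosing $v$ inside a layer when the greedy phase fails) is where I expect most of the technical work to live.
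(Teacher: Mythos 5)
There is a genuine gap. Your plan is essentially a reconstruction of the D\k{e}bski--Piecyk--Rz\k{a}\.zewski framework (branch on high-degree vertices / guess colours of a small dominating-type set built greedily at a degree threshold $t$, then finish with \textsc{2-list-colouring}), and that framework is exactly what is pinned at exponent $2/3$: branching on degree-$\geq t$ vertices costs roughly $3^{n/t}$ and guessing a second neighbourhood in the low-degree regime costs roughly $3^{t^2}$, and balancing gives $t=n^{1/3}$ and exponent $n^{2/3}$ no matter how the residual is organised. The step where you claim to beat this --- ``extra sparsity in the relevant bipartite graph \ldots lets the residual be covered by an additional branching of total size $O(n^{2/3-\varepsilon})$'' --- is precisely the missing theorem, and no mechanism is given for it. Note also that the one-shot framing (construct a single set $S$ dominating all list-size-$3$ vertices and pay $3^{|S|}$) cannot work as stated: a diameter-$3$ graph may force any such dominating set to have linear size (e.g.\ many pendant-like vertices each dominated only by its private neighbour), so one needs recursion with a potential function, and then one is back at the $2/3$ barrier described above.

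The paper's improvement does not come from a sharper degree/domination trade-off but from a structural dichotomy about an (unknown) proper $3$-colouring of the graph induced on the list-size-$3$ vertices (its Lemma~\ref{lem:magic-lemma}, proved probabilistically by sampling two random sets and using Chernoff plus pigeonhole): in a reduced instance either (i) there exist two disjoint sets $T,\widetilde T$ of size $O(\mu^{2/3-\varepsilon})$, monochromatic in different colours, such that fixing their colours forces a list-reduction on a constant fraction ($\geq \mu/7$) of the list-size-$3$ vertices, or (ii) there is a vertex $v$ whose entire second neighbourhood's colouring is determined, up to an exceptional set of size $O(\mu^{2/3-\varepsilon})$, by the colours of fixed ``representative'' vertices $n_r(p_v(x))$. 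Outcome (ii) is the crucial trick: the algorithm effectively colours a set of size up to $\mu^{2/3+2\varepsilon}$ (large enough to exploit near-diameter-$3$ of $G[L_3]$, maintained by extra branching rules B2--B4 and by never deleting coloured vertices) while only enumerating $\mu^{O(\mu^{2/3-\varepsilon})}$ guesses, because most colours are deduced rather than guessed. Nothing in your proposal plays this role. Relatedly, your expectation that the constant $1/33$ will drop out of a ``mini linear program'' over greedy/threshold exponents is unsupported; in the paper it arises from a specific inequality ($2/3-10\varepsilon>1/3+\varepsilon$) inside the probabilistic argument, tied to the branching rule bounding $|N(N(u)\cap N(v))|$, a quantity your scheme never controls.
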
 
    The restriction of $\varepsilon < 1/33$ follows from a probabilistic argument used for proving the main structural result forming the basis of our algorithm.
    
    The complexity of \textsc{(list-)3-colouring} diameter-2 graphs also remains an interesting open problem and it is possible that this problem is solvable in polynomial time. 
    
    Mertzios and Spirakis~\cite{MertziosSpirakis16} provided the first subexponential-time algorithm for \textsc{list}-$3$-\textsc{colouring} on diameter-2 graphs with running time $2^{O(\sqrt{n \log n})}$. In the particular case of diameter-2 graphs, if we colour a vertex $v$ and all its neighbours, then this immediately leaves a \textsc{2-list-colouring} instance (which can be solved in polynomial time). This immediately provides an algorithm running in time $3^{
\delta(G)}n^{O(1)}$ where $\delta(G)$ denotes the minimum degree of the graph $G$. On the other hand, high degree vertices are also useful, since `branching' on their colour may remove many colours from the lists of their neighbours. We can then track the progress of our algorithm through a potential function such as $\sum_{v\in V(G)}|L(v)|$ and roughly speaking the recursive calculations will work out because at most one `branch' has a small improvement whereas all other branches have an improvement of at least $\Delta(G)/2$ where $\Delta(G)$ denotes the maximum degree of graph $G$. Instead of such a branching approach, Mertzios and Spirakis~\cite{MertziosSpirakis16}  find a small dominating set when $\delta(G)$ is large to obtain their running time.
    
    Similarly, for a diameter-3 graph we may aim for a running time of $2^{O((n\log n)^{2/3})}$ as follows. When the maximum degree is at most $(n\log n)^{1/3}$, then the second neighbourhood of any vertex is of size at most $(n\log n)^{2/3}$. Guessing the colours of the vertices in $v\cup N(v)\cup N(N(v))$ of a vertex $v$ in a diameter-3 graph leaves a \textsc{2-list-colouring} instance again which is solvable in polynomial time. By `branching' on the possible colours for a vertex of highest degree, removing vertices with a single colour in their list and updating colours throughout, we can hope to reduce to such a situation.
    
    The only small issue with this reasoning is that the graph classes of diameter-2 and diameter-3 graphs are not hereditary, and so we cannot simply `ignore' vertices whose colour has already been assigned while keeping our diameter assumption. Nevertheless, with some additional structural reasoning, the overall approach of branching on the colours of vertices of maximum degree until there is a vertex of small degree can be made to work. Dębski, Piecyk and Rzążewski~\cite{DebskiPiecykRzazewski} provide such an algorithm that solves \textsc{list-3-colouring} on $n$-vertex graphs of diameter~$3$ in time $2^{O((n\log n)^{2/3})}$.

    Our algorithm builds on this strategy, and is similar in approach to the algorithm Dębski, Piecyk and Rzążewski~\cite{DebskiPiecykRzazewski} introduce for the diameter-$2$ setting. To extend their approach, we use new structural insights as well as introducing additional branching rules to handle the issue of increasing diameter when removing coloured vertices.
    
	\section{Preliminaries}
    All graphs we consider in this paper are simple and finite. 
    We use $[k]$ to denote the set $\{1, \ldots, k\}$ for positive integers $k$ and $\log n$ for the natural ($e$-based) logarithm. 

    A problem instance for \textsc{list}-$3$-\textsc{colouring} consists of a graph $G$ and a list assignment $L : V(G) \to 2^{[3]}$ (that is, $L(v) \subseteq \{1,2,3\}$ for all $v \in V(G)$). The decision problem is then to determine whether there exists a proper colouring $f : V(G) \to[3]$ of $G$ such that $f(v) \in L(v)$ for all $v \in V(G)$. A problem instance for \textsc{2-list-colouring} consists of a graph $G$ and a list assignment $L$ such that $|L(v)|\leq 2$ for all $v\in V(G)$.
	
	We say two instances $(G, L)$ and $(G', L')$ are \emph{equivalent} if $(G, L)$ is a \textsc{yes}-instance if and only if $(G', L')$ is.
	
Given a graph $G$ and a list assignment $L: V(G) \to 2^{[3]}$ and  $i \in [3]$, let 
    \[
    L_i=\{v \in V(G) \text{ with }|L(v)| = i\}.
    \]
    
    For a positive integer $k$ and a vertex $v \in V(G)$, let $N^{(k)}(v)$ be the set of all vertices at distance exactly $k$ from $v$, and let $N^{(\leq k)}(v)$ be the set of all vertices at distance at most $k$ from $v$, excluding $v$ itself. Let $N^{(\leq k)}[v]$ be the closed neighbourhood equivalent, that is $N^{(\leq k)}[v] := N^{(\leq k)}(v) \cup \{v\}$. Additionally, let $N^{(k)}_{L_i}(v)$ and $N^{(\leq k)}_{L_i}(v)$ for $i \in [3]$ be the sets $N^{(k)}(v) \cap L_i$ and $N^{(\leq k)}(v) \cap L_i$ respectively. If $k=1$, we omit the superscript and simply use $N_{L_i}(v)$. Analogously we define $N^{(\leq k)}_{L_i}[v]$ and $N_{L_i}[v]$ for the closed neighbourhood equivalents. Finally, for a set $S \subseteq V(G)$, we define $N^{(\leq k)}(S)$ to be the set of all vertices in $V(G) \setminus S$ at distance at most $k$ from $S$, and $N^{(\leq k)}[S] := N^{(\leq k)}(S) \cup S$.

    We say that a probabilistic event $E$ happens \emph{with high probability} with respect to a variable $\ell$ if $\lim_{\ell \to \infty} \mathbb{P}[E] = 1$. In this paper, the relevant variable will always be the number of vertices in the graph. 

    We also use the following Chernoff bounds (see~\cite{Chernoffcite} for a proof).
    \begin{lemma}[Chernoff bound]
    \label{lem:Chernoff}
        Let $X\sim \text{Bin}(n,p)$ be the sum of $n$ independent random variables taking value $0$ with probability $1-p$ and value $1$ with probability $p$. Then for $\delta\in [0,1]$,
        \[
        \mathbb{P}(|X-np|\geq \delta np)\leq \exp(-\delta^2np/3).
        \]
        For $\delta>0$,
        \[
        \mathbb{P}(X\geq (1+\delta) np)\leq \exp(-\delta^2np/(2+\delta)).
        \]
    \end{lemma}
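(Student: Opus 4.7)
The plan is to prove both inequalities via the classical Chernoff (exponential-moment) method. For any $t>0$, Markov's inequality applied to $e^{tX}$ gives $\mathbb{P}(X\geq a)\leq e^{-ta}\mathbb{E}[e^{tX}]$; because $X=\sum_{i=1}^n X_i$ is a sum of independent Bernoulli$(p)$ variables, the moment generating function factorises as
\[
\mathbb{E}[e^{tX}] = \prod_{i=1}^n \mathbb{E}[e^{tX_i}] = (1-p+pe^t)^n,
\]
and applying $1+x\leq e^x$ with $x=p(e^t-1)$ gives the estimate $\mathbb{E}[e^{tX}]\leq \exp\!\bigl(np(e^t-1)\bigr)$.

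First I would establish the one-sided inequality for $\delta>0$. Combining the above with Markov at threshold $a=(1+\delta)np$ and choosing the minimising value $t=\ln(1+\delta)$ yields
\[
\mathbb{P}\bigl(X\geq (1+\delta)np\bigr) \leq \left(\frac{e^\delta}{(1+\delta)^{1+\delta}}\right)^{np}.
\]
It then remains to verify the scalar inequality $\delta-(1+\delta)\ln(1+\delta) \leq -\delta^2/(2+\delta)$ for all $\delta>0$. Both sides vanish at $\delta=0$, and a short derivative calculation shows that the difference is monotone on $(0,\infty)$; combining this with the previous display gives the second bound of the lemma.

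For the two-sided inequality with $\delta\in[0,1]$, the upper tail $\mathbb{P}(X\geq(1+\delta)np)\leq \exp(-\delta^2 np/3)$ follows from the one-sided inequality since $\delta^2/(2+\delta)\geq \delta^2/3$ whenever $\delta\leq 1$. For the lower tail, I would run the symmetric argument with $e^{-tX}$ for $t>0$: Markov gives $\mathbb{P}(X\leq (1-\delta)np)\leq e^{t(1-\delta)np}\mathbb{E}[e^{-tX}]$, the same MGF estimate applies (now with exponent $-t$), and optimising at $t=-\ln(1-\delta)$ produces
\[
\mathbb{P}\bigl(X\leq (1-\delta)np\bigr) \leq \left(\frac{e^{-\delta}}{(1-\delta)^{1-\delta}}\right)^{np} \leq \exp(-\delta^2 np/2),
\]
where the last step reduces to the scalar inequality $-\delta-(1-\delta)\ln(1-\delta)\leq -\delta^2/2$ on $[0,1)$, verified by checking that $f(\delta)=\delta+(1-\delta)\ln(1-\delta)-\delta^2/2$ satisfies $f(0)=f'(0)=0$ and $f''\geq 0$. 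Summing the two tails gives the first stated bound.

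The proof is essentially routine once the Chernoff template is set up; the only substantive work lies in the two scalar inequalities bounding the exact log-MGF exponent by its quadratic surrogate, which are standard convexity/Taylor arguments. The main bookkeeping obstacle is ensuring that the constants $2+\delta$, $2$, and $3$ all match the stated form (any constant factor from summing the two tails can be absorbed in the standard way). Since the result is classical, one could equivalently cite \cite{Chernoffcite} as the authors do.
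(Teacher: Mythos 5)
Your exponential-moment derivation of the one-sided bound is correct and complete: the MGF estimate, the choice $t=\ln(1+\delta)$, and the scalar inequality $\delta-(1+\delta)\ln(1+\delta)\leq-\delta^2/(2+\delta)$ (equivalently $\ln(1+\delta)\geq 2\delta/(2+\delta)$) give exactly $\mathbb{P}(X\geq(1+\delta)np)\leq\exp(-\delta^2np/(2+\delta))$, and the paper itself only cites a reference for this, so a self-contained standard proof is a fine route. The lower-tail computation with exponent $-\delta^2/2$ is also correct.

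The genuine gap is the last step, ``summing the two tails gives the first stated bound,'' together with the remark that the constant factor ``can be absorbed in the standard way.'' Summing gives $\exp(-\delta^2np/3)+\exp(-\delta^2np/2)\leq 2\exp(-\delta^2np/3)$, and the factor $2$ cannot be absorbed: the bound as stated, without the factor $2$, is simply false in general. For instance, with $n=1$, $p=1/2$, $\delta=1$ one has $\mathbb{P}(|X-np|\geq\delta np)=1$ while $\exp(-\delta^2np/3)=\exp(-1/6)<1$. There is no trick that removes the factor $2$ without either weakening the exponent or adding hypotheses (e.g.\ a lower bound on $\delta^2np$), so the correct conclusion of your argument is the standard two-sided form $\mathbb{P}(|X-np|\geq\delta np)\leq 2\exp(-\delta^2np/3)$ for $\delta\in[0,1]$. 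This is really a defect of the lemma as stated in the paper (standard references include the factor $2$), and it is harmless for every application made here: each use is followed by a union bound or a limit where an extra factor $2$ changes nothing, and the one displayed application with $\delta=1$ can instead be obtained directly from your one-sided bound, since $\exp(-np/(2+1))=\exp(-np/3)$. So you should either prove and use the factor-$2$ version, or note explicitly that the applications only require it; claiming the stated factor-free inequality follows from summing the tails would be an error.
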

    In particular, we use multiple times that if $\mathbb{E}[X]=np\geq 3\log n$ for $n$ sufficiently large, then 
    \[
    \mathbb{P}(X\geq 2\mathbb{E}[X])\leq \mathbb{P}(|X-np|\geq np)\leq \exp(-np/3)\leq \exp(-\log n)=n^{-1}\to 0
    \]
    as $n\to \infty$. 
    One time, we will require a slightly stronger version of the Chernoff bound above:
    \begin{lemma}[Chernoff bound - upper bound variant]
    \label{lem:Chernoff-upper-bound-variant}
        Let $X\sim \text{Bin}(n,p)$ be the sum of $n$ independent random variables taking value $0$ with probability $1-p$ and value $1$ with probability $p$. Then for $\delta > 0$ and $b \geq n p$, 
        \[
        \mathbb{P}(X\geq (1+\delta) b)\leq \exp(-\delta^2b/(2+\delta)).
        \]
    \end{lemma}
    The above lemma follows from the observation that the standard proof of Chernoff tail bounds based on the moment generating function (see~\cite{Chernoffcite}) goes through with an upper bound on the expected value of $X$ rather than the expected value itself as well.

	\section{Reduction and branching rules}
	We first consider a number of instance features we may exploit to reduce the complexity of the problem instance. Note that if no vertex contains three colours in its list, the problem may be reduced to $2$-\textsc{SAT}. Hence, we have the following theorem.

    \begin{theorem}[Edwards~\cite{2satforcolouring}]\label{thm:list-2-colouring-poly-time}
        Let $G$ be a graph and let $L : V(G) \to 2^{[3]}$ be a list assignment such that $|L(v)| \leq 2$ for all $v \in V(G)$. Then it is decidable in polynomial time whether $G$ admits a proper $L$-colouring.
    \end{theorem}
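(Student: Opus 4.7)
The plan is to reduce the problem to \textsc{2-SAT}, which is solvable in polynomial time by the classical algorithm of Aspvall, Plass and Tarjan based on strongly connected components of the implication graph. The reduction is the standard one, but let me describe it carefully.

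First, I would preprocess the instance by repeatedly propagating forced colours: if some vertex $v$ has $L(v)=\emptyset$, output \no; if $|L(v)|=1$ with $L(v)=\{c\}$, then for every neighbour $u$ of $v$ remove $c$ from $L(u)$, and then delete $v$ from the graph. This can be done in polynomial time and either terminates with a \no-answer or produces an equivalent instance in which every vertex $v$ satisfies $|L(v)|=2$. So I may assume $|L(v)|=2$ for every $v$.

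For each vertex $v$, fix an arbitrary ordering $L(v)=\{a_v,b_v\}$ and introduce a Boolean variable $x_v$ with the interpretation that $x_v$ is \emph{true} means $v$ is coloured $a_v$ and $x_v$ is \emph{false} means $v$ is coloured $b_v$. A proper $L$-colouring then corresponds exactly to a satisfying assignment of the following 2-CNF formula: for every edge $uv\in E(G)$ and every pair of colours $(c,c')\in L(u)\times L(v)$ with $c=c'$, add the clause forbidding $u$ being assigned $c$ and $v$ being assigned $c'$ simultaneously. Since $c\in L(u)$ is either $a_u$ or $b_u$, the statement ``$u$ is coloured $c$'' is a single literal in $x_u$; likewise for $v$. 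So each conflict gives a clause of width exactly $2$. Since $|L(u)|=|L(v)|=2$, we add at most $2$ such clauses per edge, giving a formula of size $O(|V(G)|+|E(G)|)$, constructible in polynomial time.

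By construction, an assignment satisfies every clause if and only if the induced colouring is proper and list-compliant; so the \threelistcol instance is a \yes-instance iff the 2-CNF formula is satisfiable. The latter is decidable in linear time in the size of the formula, hence in polynomial time in $|V(G)|+|E(G)|$. The only delicate point in writing this up carefully is the bookkeeping when a vertex has $|L(v)|=1$ after preprocessing has been done on the fly, but this is handled cleanly by the propagation step above, so I do not expect any genuine obstacle.
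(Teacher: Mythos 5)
Your proposal is correct and follows the same route as the paper, which proves this statement simply by citing the standard reduction of \textsc{2-list-colouring} to $2$-\textsc{SAT} (Edwards); you have merely spelled out the folklore reduction (propagation of singleton lists, one Boolean variable per vertex with a two-element list, one width-two clause per edge and shared colour, then a linear-time $2$-\textsc{SAT} solver). No gaps.
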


    As a first step in simplifying our problem instances, we have the following reduction rules:
    \begin{itemize}
        \item[{\textbf{R1}}] If there exists a vertex $v \in V(G)$ with $L(v) = \emptyset$, the instance is infeasible. Return \textsc{no}.
        \item[{\textbf{R2}}] If $L_3 = \emptyset$, the instance may be solved in polynomial time using \cref{thm:list-2-colouring-poly-time}.
        \item[{\textbf{R3}}] If there exist adjacent vertices $u, v \in V(G)$ with $L(u) = \{c\}$ for some $c \in [3]$ and $c \in L(v)$, remove $c$ from $L(v)$.
        \item[{\textbf{R4}}] If $|L_3| < c$ for some constant $c$, exhaustively guess the colours assigned to the vertices in $L_3$, and solve the rest of the instance using rule R2.
    \end{itemize}
    The constant $c$ used in R4 will be determined later.

    Note that none of these rules affects the feasibility of the instance, and each may be checked and executed in polynomial time. Moreover, for R2-R4, the function $\sum_{v \in V(G)} |L(v)|$ strictly decreases by applying the reduction rule. Hence, each of these reduction rules is applied only a polynomial number of times for a given instance.

    In addition to these reduction rules, we introduce a number of branching rules. In contrast to the reduction rules, these branching rules do not directly simplify the problem instance. Rather, the branching rules each generate a number of new instances, each of which is simpler than the original instance. Solving these simpler instances in turn allows us to solve the original instance, enabling a recursive algorithm. Before introducing the first branching rules, we need the following lemma.

    \begin{lemma}\label{lem:colour-option-path-via-L2-vertices}
        Let $u \in V(G)$.  Then there exists a colour $c \in L(u)$ such that if we assign $c$ to $u$ and apply reduction rule R3 repeatedly, we obtain an instance $(G, L')$ with $|L'_3| \leq |L_3|-|N_{L_3}(N_{L_2}(N_{L_2}(u)))|/3$. 
    \end{lemma}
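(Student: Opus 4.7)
The plan is a probabilistic averaging argument. Let $S := N_{L_3}(N_{L_2}(N_{L_2}(u)))$, and for each $w \in S$ fix a witness path $u - a_w - b_w - w$ with $a_w \in N_{L_2}(u)$ and $b_w \in N_{L_2}(N_{L_2}(u))$, as guaranteed by the definition of $S$. I draw $c \in L(u)$ uniformly at random, assign $L(u) := \{c\}$, and run R3 to fixpoint to obtain $L'$. Setting $X_w := \mathbf{1}[w \notin L'_3]$, the goal is $\mathbb{E}\bigl[\sum_{w \in S} X_w\bigr] \geq |S|/3$; a pigeonhole argument then produces a deterministic $c \in L(u)$ that removes at least $|S|/3$ vertices from $L_3$, yielding $|L'_3| \leq |L_3| - |S|/3$.

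The core of the analysis is tracking the forced-colour cascade along $P_w = u - a_w - b_w$. Assigning $c$ to $u$ triggers R3 on $a_w$ exactly when $c \in L(a_w)$; in that event $a_w$ becomes an $L_1$ vertex with fixed colour $y_a := L(a_w) \setminus \{c\}$. A further application of R3 triggers on $b_w$ when $y_a \in L(b_w)$; if so then $b_w$ is $L_1$ with colour $y_b := L(b_w) \setminus \{y_a\}$. Since $L(w) = \{1,2,3\}$ contains $y_b$, one more R3 step shortens $L(w)$, so $w$ leaves $L_3$ and $X_w = 1$. It therefore suffices to show $\mathbb{P}[X_w = 1] \geq 1/3$ for each $w \in S$.

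This reduces to a short case analysis on the overlap of $L(a_w)$ and $L(b_w)$. Since both are $2$-subsets of $\{1,2,3\}$, they intersect in $1$ or $2$ colours. When $L(a_w) = L(b_w)$, the cascade succeeds iff $c \in L(a_w)$, which has probability $|L(a_w) \cap L(u)|/|L(u)|$; when $|L(u)| = 3$ this equals $2/3$. When $|L(a_w) \cap L(b_w)| = 1$ with shared colour $s$, the cascade succeeds iff $c$ equals the specific colour $L(a_w) \setminus \{s\}$, which has probability $1/3$ when $|L(u)| = 3$. Linearity of expectation then delivers the desired inequality. The main subtlety I anticipate is the corner case $|L(u)| < 3$: along a single fixed $P_w$ the probability can collapse to $0$ when the required trigger colour lies outside $L(u)$, so the argument is cleanest when $|L(u)| = 3$ (e.g.\ when $u \in L_3$). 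To cover smaller lists I would either appeal to the surrounding branching context (which supplies $u \in L_3$) or reinforce the argument by averaging over multiple witness paths in order to retain the $1/3$ factor.
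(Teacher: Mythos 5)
Your proposal is essentially the paper's own argument: for each vertex of $S$ you fix a witness path and identify the trigger colour whose assignment to $u$ forces the two intermediate $L_2$-vertices in turn, and your uniform-random-colour-plus-linearity step is just the paper's pigeonhole over the three colours in disguise. Your flagged corner case $|L(u)|<3$ is a fair observation (the paper's proof also implicitly needs the trigger colour to lie in $L(u)$), and resolving it by restricting to $u \in L_3$ matches exactly how the lemma is invoked, namely in branching rule B3.
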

    \begin{proof}
        Let $x, y \in L_2$ and $v\in L_3$ such that $u - x - y - v$ is a path of length 3 in $G$. Let $c \in L(x) \cap L(y)$ and let $c'$ be the other colour in $L(x)$ (that is, $L(x) = \{c, c'\})$. If we assign the colour $c'$ to $u$, applying reduction rule R3 will result in vertex $x$ being assigned colour $c$. Hence, applying R3 again results in $L(y)$ losing a colour, determining the colour of $y$ as well. As a result, $L(v)$ will lose the colour assigned to $y$ and hence $v$ will be removed from $L_3$. In particular, for each $v\in N_{L_3}(N_{L_2}(N_{L_2}(u)))$, there is a colour $c$ that can be assigned to $v$ such that $v$ is removed from $L_3$ and so the statement follows from the pigeonhole principle.
    \end{proof} 

    Let $\mu := |L_3|$. We have the following branching rules:
    \begin{itemize}
        \item[{\textbf{B1}}] If there exists a vertex $v \in L_2 \cup L_3$ with $|N_{L_3}(v)| \geq \mu^{1/3 + \varepsilon}$, branch on the colour assigned to $v$. 
        \item[{\textbf{B2}}] If there exists a vertex $v \in L_3$ that has via neighbours in $L_2$ at least $\mu^{1/3 + \varepsilon}$ second neighbours in $L_3$, that is, \[|N_{L_3}(N_{L_2}(v))| \geq \mu^{1/3+\varepsilon},\] then branch on the colour assigned to $v$.
        \item[{\textbf{B3}}] If there exists a vertex $v \in L_3$ with \[|N_{L_3}(N_{L_2}(N_{L_2}(v)))| \geq \mu^{1/3 + \varepsilon},\] using \cref{lem:colour-option-path-via-L2-vertices} there exists a colour $c \in [3]$ such that if we assign $c$ to $v$, the lists of at least $\mu^{1/3 + \varepsilon}/3$ vertices in $N_{L_3}(N_{L_2}(N_{L_2}(v)))$ decrease in size after repeatedly applying R3. We branch on this colour for $v$: either we assign $v$ the colour $c$, or we remove $c$ from the list $L(v)$.
        \item[{\textbf{B4}}] If there exist two distinct vertices $u, v \in L_3$ with \[|N_{L_3}(N(u) \cap N(v))|\geq \mu^{1/3 + \varepsilon},\] we branch on the colours of $u$ and $v$. If $u$ and $v$ are adjacent, we create six branches, one for each of the possible colour assignments for $u$ and $v$ in which they are assigned different colours. If $u$ and $v$ are non-adjacent, we additionally consider a branch where we merge $u$ and $v$ into a single vertex but do not fix any colours. 
    \end{itemize}

    We establish the correctness of these branching rules in the appendices (see Lemma~\ref{lem:correctnessB1-B4}).

    We say an instance $(G,L)$ is \emph{reduced} if none of the above reduction or branching rules can be applied to reduce the complexity of the instance. Note that other than merging two vertices in branching rule B4, none of the branching rules remove any of the vertices. When a vertex $v$ is in $L_1$, it has effectively been coloured and no longer plays a role for the remainder of a reduced instance. We do not delete it to maintain diameter at most $3$.
    
    Branching rules B2 and B3 specifically are designed to establish and maintain the property that $G[L_3]$ is close to having diameter $3$ in the following sense: 

    \begin{lemma}\label{lem:near-diameter-3}
        Let $(G, L)$ be a reduced instance with $G$ a diameter-$3$ graph. Then for all vertices $u \in L_3$, it holds that $|N_{G[L_3]}^{(\leq 3)}(u)| \geq \mu - 4\mu^{2/3 + 2\varepsilon}$.
    \end{lemma}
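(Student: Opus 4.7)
The plan is to upper bound the set $A := L_3 \setminus (\{u\} \cup N_{G[L_3]}^{(\leq 3)}(u))$ of vertices in $L_3$ that are \emph{not} within distance $3$ of $u$ in the induced subgraph, and show $|A| < 4\mu^{2/3+2\varepsilon}$; the lemma then follows since $|N_{G[L_3]}^{(\leq 3)}(u)| = \mu - 1 - |A|$.

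Before the main case analysis I would record one structural consequence of exhaustive application of R3: no vertex of $L_3$ has a neighbour in $L_1$, since if $w \in L_1$ had $L(w) = \{c\}$ and were adjacent to $v \in L_3$, then R3 would force $c \notin L(v)$, contradicting $|L(v)| = 3$. Thus every neighbour of an $L_3$-vertex lies in $L_2 \cup L_3$, which controls the admissible internal vertices of short $u$-$v$ paths. Using that $G$ has diameter $3$, each $v \in A$ then satisfies $d_G(u,v) \in \{2, 3\}$, since $d_G(u,v) = 1$ would put $v$ directly into $N_{G[L_3]}(u)$.

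For $v \in A$ with $d_G(u,v) = 2$, every common neighbour of $u$ and $v$ lies in $L_2 \cup L_3$ by the above, and no common neighbour can lie in $L_3$ (else $d_{G[L_3]}(u,v) = 2$); hence $v \in N_{L_3}(N_{L_2}(u))$, a set of size less than $\mu^{1/3+\varepsilon}$ by B2. For $v \in A$ with $d_G(u,v) = 3$, any length-$3$ $G$-path $u$--$a$--$b$--$v$ has $a,b \in L_2 \cup L_3$ and not both in $L_3$. I would split on which path types exist. (i) If some such path has $a \in L_3$ and $b \in L_2$, then $a \in N_{L_3}(u)$ and $v \in N_{L_3}(N_{L_2}(a))$; summing over $a$, B1 and B2 (applied at $a$) give at most $\mu^{1/3+\varepsilon} \cdot \mu^{1/3+\varepsilon} = \mu^{2/3+2\varepsilon}$ such $v$. (ii) If every path has $a \in L_2$ but some path has $b \in L_3$, then $b \in N_{L_3}(N_{L_2}(u))$ and $v \in N_{L_3}(b)$, giving the symmetric bound $\mu^{2/3+2\varepsilon}$ via B2 and B1. (iii) Otherwise every such path has $a, b \in L_2$, so $v \in N_{L_3}(N_{L_2}(N_{L_2}(u)))$, bounded by $\mu^{1/3+\varepsilon}$ via B3.

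Summing the four contributions gives $|A| < \mu^{1/3+\varepsilon} + 2\mu^{2/3+2\varepsilon} + \mu^{1/3+\varepsilon} \leq 4\mu^{2/3+2\varepsilon}$, where the last inequality uses $\mu^{1/3+\varepsilon} \leq \mu^{2/3+2\varepsilon}$, which holds since $\mu \geq 1$ in a reduced instance (by R4). This yields the lemma. The one point requiring care is to check that the case split on length-$3$ paths is genuinely exhaustive and that in each case the resulting set sits inside a product of two branching-rule bounds (or inside the single B3-bounded set); beyond that, the argument is routine bookkeeping.
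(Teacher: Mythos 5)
Your proposal is correct and takes essentially the same route as the paper's proof: exhaustive application of R3 excludes $L_1$ vertices from short $u$--$v$ paths, and the remaining paths are split into the same four types (one interior vertex in $L_2$; two in $L_2$; one in $L_2$ and one in $L_3$, in either order), each bounded by the inapplicability of B1, B2 or B3, giving the total $4\mu^{2/3+2\varepsilon}$. The only difference is cosmetic bookkeeping (you count the excluded vertices in $A$ rather than the paths), so there is nothing to add.
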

    \begin{proof}
        Let $u \in L_3$. 
        For each $v\in L_3$, there is a path between $u$ and $v$ in $G$ of length at most 3 since $G$ has diameter $3$. We prove the following stronger statement: there are at most $4\mu^{2/3 + \varepsilon}$ vertices $v \in L_3$ such that there is a $u- v$ path of length at most 3 containing vertices in $V(G) \setminus L_3$. 
        
        By the reduction rules, no vertex in $L_1$ is adjacent to a vertex in $L_3$, and so a $u-v$ path of length 3 has no vertices in $L_1$. Therefore, we consider the paths of length at most $3$ containing vertices in $L_2$, which must have one of the following forms:
        \begin{enumerate}
            \item $u - x - v$ where $x \in L_2$,
            \item $u - x - y - v$ where $x, y \in L_2$,
            \item $u - x - y - v$ where $x \in L_2$ and $y \in L_3$, or
            \item $u - x - y - v$ where $x \in L_3$ and $y \in L_2$.
        \end{enumerate}
        Because B2 is not applicable to the reduced instance, we find that $|N_{L_3}(N_{L_2}(u))|<\mu^{1/3+\varepsilon}$ and in particular there are at most $\mu^{1/3 + \varepsilon}$ vertices $v \in L_3$ reachable from $u$ via paths of type 1.
        Similarly, because B3 is not applicable, there are at most $\mu^{1/3 + \varepsilon}$ vertices $v \in L_3$ reachable via paths of type 2. Because B1 and B2 are not applicable, we moreover find that there are at most $\mu^{2/3 + 2\varepsilon}$ vertices $v \in L_3$ reachable via paths of type 3, and at most $\mu^{2/3 + 2\varepsilon}$ vertices $v \in L_3$ reachable via paths of type 4.
        In total, there are at most $4\mu^{2/3 + 2\varepsilon}$ vertices $v \in L_3$ reachable from $u$ via paths of length at most $3$ that are not fully contained in $G[L_3]$, as desired.     
    \end{proof}
    
    In the next section, we will describe one more branching rule, which is only applied when the instance is reduced. 

    \section{Main lemma and the last branching rule}

    To introduce the final branching rule, we first require the following structural result that we will apply to $H=G[L_3]$. See \cref{fig:magic-lemma-outcomes} for an illustration of the second and third outcomes of the lemma below.

    \begin{figure}
        \centering
        \begin{subfigure}[b]{0.47\linewidth}
            \centering
            \includegraphics[width=\textwidth]{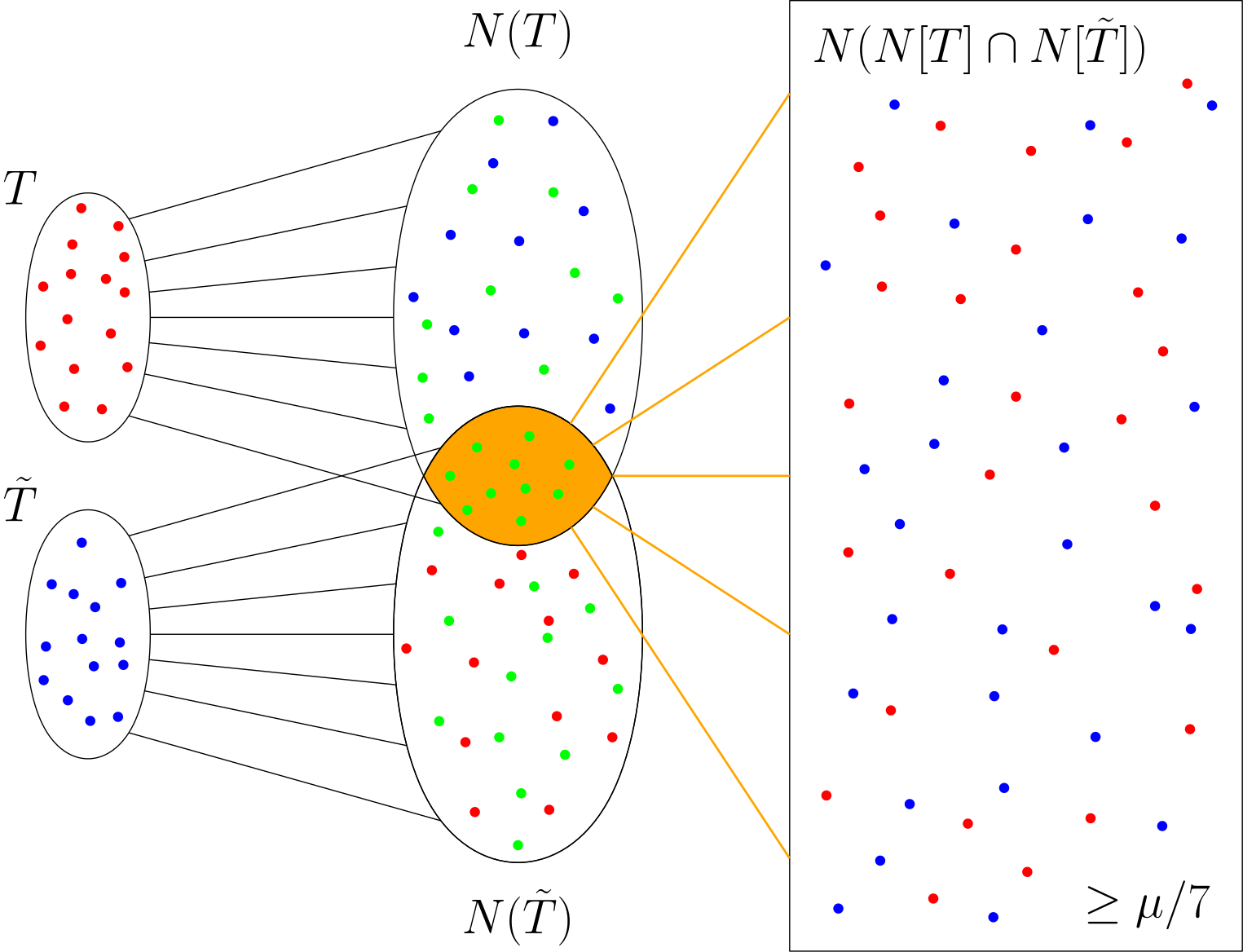}
            \caption{Outcome 2.}
            \label{fig:outcome2}
        \end{subfigure}
        \hfill
        \begin{subfigure}[b]{0.47\linewidth}
         \centering
         \includegraphics[width=\textwidth]{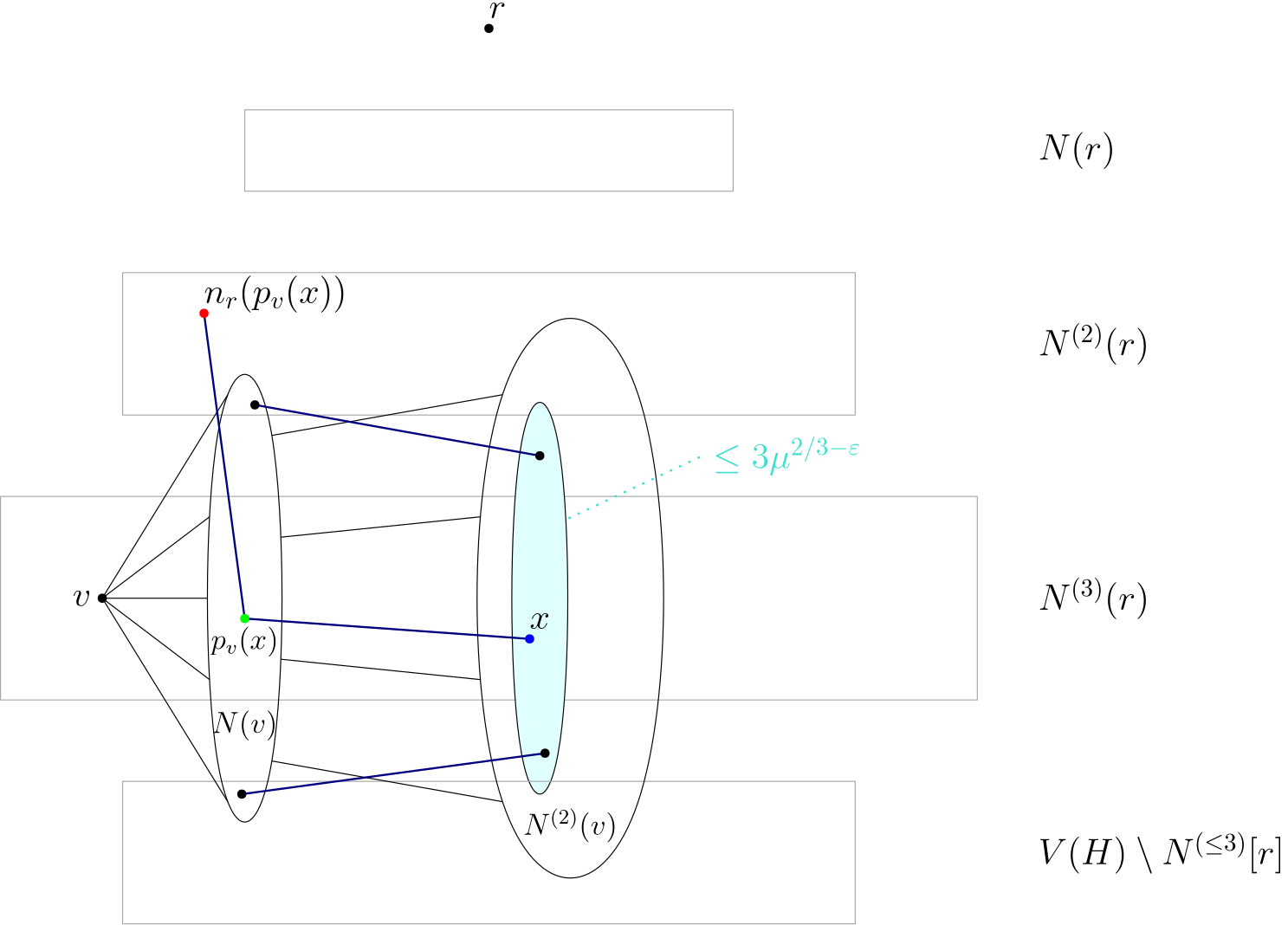}
         \caption{Outcome 3.}
         \label{fig:outcome3}
        \end{subfigure}
        \caption{Illustrations of the second and third outcomes in \cref{lem:magic-lemma} respectively.}
        \label{fig:magic-lemma-outcomes}
    \end{figure}
    \begin{lemma}\label{lem:magic-lemma}
        Let $\varepsilon < 1/33$. There exists an $\ell > 0$ such that the following holds for all $\mu \geq \ell$. 
        
        Let $H$ be a $\mu$-vertex graph such that for each vertex $v \in V(H)$,
        \begin{itemize}
            \item $|N(v)| \leq \mu^{1/3 + \varepsilon}$ and $|N^{(2)}(v)| \leq \mu^{2/3 + 2 \varepsilon}$, and
            \item $|N^{(\leq 3)}(v)| \geq \mu - 4\mu^{2/3+2\varepsilon},$
        \end{itemize}
        and for all pairs of distinct vertices $u, v \in V(H)$,
        \begin{itemize}
            \item $|N(N(u) \cap N(v))| \leq \mu^{1/3+ \varepsilon}$.
        \end{itemize} 
        Let $r \in V(H)$. For each vertex $a \in N^{(3)}(r)$, fix a neighbour $n_r(a) \in N^{(2)}(r)$. Additionally, for each $v \in V(H)$ and each $x \in N^{(2)}(v)$, fix a vertex $p_v(x) \in N(v) \cap N(x)$. Then one of the following statements must hold.
        \begin{enumerate}
            \item $H$ is not $3$-colourable.
            \item $H$ admits a $3$-colouring $\phi: V(H) \to [3]$ and there exist disjoint sets $T, \widetilde{T} \subseteq V(H)$ of size at most $2\mu^{2/3 - \varepsilon}$ such that \[|N(N[T] \cap N[\widetilde{T}])| \geq  \mu/7,\] both $T$ and $\widetilde{T}$ are monochromatic under $\phi$, and the vertices in $T$ receive a colour different than the one the vertices in $\widetilde{T}$ receive. 
            \item $H$ admits a $3$-colouring $\phi: V(H) \to [3]$ and there exists a vertex $v \in V(H)$ such that \[
            |\{x \in N^{(2)}(v):p_v(x) \not \in N^{(3)}(r) \text{ or }\phi(n_r(p_v(x)))\neq \phi(x)\}|\leq 3\mu^{2/3-\varepsilon}.
            \]
        \end{enumerate}
        Moreover, if $H$ admits a $3$-colouring $\phi: V(H) \to [3]$, then either outcome 2 or 3 must hold for this specific $3$-colouring $\phi$.
    \end{lemma}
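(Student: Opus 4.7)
If $H$ is not $3$-colourable, outcome 1 holds and we are done, so I assume $\phi$ is a fixed $3$-colouring of $H$ and work to show outcome 2 or 3 holds for this specific $\phi$ (which also yields the final ``moreover'' claim). Suppose outcome 3 fails for $\phi$; then for every $v \in V(H)$ the bad set
\[
B_v := \{x \in N^{(2)}(v) : p_v(x) \notin N^{(3)}(r) \text{ or } \phi(n_r(p_v(x))) \neq \phi(x)\}
\]
satisfies $|B_v| > 3\mu^{2/3-\varepsilon}$, and thus $\sum_{v \in V(H)} |B_v| > 3\mu^{5/3-\varepsilon}$. I aim to produce $T$ and $\widetilde T$ satisfying outcome 2 from this surplus of bad pairs.

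Next, split $B_v = B_v^A \sqcup B_v^B$, where Type A consists of pairs with $p_v(x) \notin N^{(3)}(r)$ and Type B of the remaining pairs (those with $p_v(x) \in N^{(3)}(r)$ but $\phi(n_r(p_v(x))) \neq \phi(x)$). The near-diameter hypothesis together with the degree bounds gives $|V(H) \setminus N^{(3)}(r)| = O(\mu^{2/3+2\varepsilon})$, and for each such vertex $z$ at most $|N(z)|^2 \leq \mu^{2/3+2\varepsilon}$ pairs $(v,x)$ have $p_v(x) = z$; so the total number of Type A pairs is $O(\mu^{4/3+4\varepsilon})$. Since $\varepsilon < 1/33 < 1/15$ forces $\mu^{4/3+4\varepsilon} = o(\mu^{5/3-\varepsilon})$, Type B pairs number at least $\mu^{5/3-\varepsilon}$. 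By pigeonhole over the six possible colour triples $(\phi(n_r(p_v(x))), \phi(p_v(x)), \phi(x))$ (which must consist of three distinct colours, since $p_v(x)$ is adjacent to both $n_r(p_v(x))$ and $x$, and those two have different colours), a single triple $(c_1, c, c_2)$ accounts for at least $\mu^{5/3-\varepsilon}/6$ of the Type B pairs; I restrict attention to these.

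Each remaining pair determines a witness $p := p_v(x) \in N^{(3)}(r) \cap \phi^{-1}(c)$, its fixed ancestor $y := n_r(p) \in Y_{c_1} := N^{(2)}(r) \cap \phi^{-1}(c_1)$, and a neighbour $x \in X_{c_2} := \phi^{-1}(c_2)$ of $p$. Conversely, each $p$ serves as the witness for at most $|N(p)|^2 \leq \mu^{2/3+2\varepsilon}$ pairs, so the set $P$ of distinct witnesses satisfies $|P| \geq \mu^{1-3\varepsilon}/6$. For each $p \in P$ fix a distinguished $x_p \in N(p) \cap X_{c_2}$. I would then construct $T \subseteq Y_{c_1}$ and $\widetilde T \subseteq X_{c_2}$, each of size at most $2\mu^{2/3-\varepsilon}$, via a combined averaging and random-sampling argument on the bipartite auxiliary multigraph whose edges are the pairs $(n_r(p), x_p)$, aiming to maximise the number of $p \in P$ with $n_r(p) \in T$ and $\widetilde T \cap N(p) \neq \emptyset$. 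Every such ``captured'' $p$ lies in $N[T] \cap N[\widetilde T]$.

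The main obstacle is the final lower bound $|N(N[T] \cap N[\widetilde T])| \geq \mu/7$. Because $|N(p)| \leq \mu^{1/3+\varepsilon}$ for every $p$, $N[T] \cap N[\widetilde T]$ must contain at least $\mu^{2/3-\varepsilon}/7$ distinct captured witnesses whose combined neighbourhoods cover $\mu/7$ vertices of $V(H)$. The exponent budget here is delicate: from $|P| \geq \mu^{1-3\varepsilon}/6$ one can in expectation capture a large number of $p$'s, but the pairwise-overlap bound $|N(N(u) \cap N(v))| \leq \mu^{1/3+\varepsilon}$ and the near-diameter property $|N^{(\leq 3)}(v)| \geq \mu - 4\mu^{2/3+2\varepsilon}$ must both be invoked to ensure the neighbourhoods of the captured witnesses do not concentrate on a small set. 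I expect the threshold $\varepsilon < 1/33$ to emerge precisely from simultaneously satisfying the exponent constraints in the Type~A/B counting step, the pigeonhole loss of $\mu^{3\varepsilon}$ in passing from pairs to distinct witnesses, and the final $\mu/7$ covering step; it may also help to subdivide the Type B pairs by whether $\phi(v) = c_1$ or $\phi(v) = c_2$, which gives additional structure on the colour-class composition of $N(p)$.
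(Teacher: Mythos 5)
Your reduction to the case where outcome 3 fails for every $v$, the Type~A/Type~B counting, and the passage from pairs to distinct witnesses are fine as far as they go, but the argument stops exactly where the real work of the lemma lies: you never actually construct $T$ and $\widetilde{T}$. You announce ``a combined averaging and random-sampling argument'', acknowledge that ``the exponent budget here is delicate'', and say you \emph{expect} the threshold $\varepsilon<1/33$ to emerge. That unexecuted step is the entire content of the lemma; in particular the hypothesis $|N(N(u)\cap N(v))|\leq\mu^{1/3+\varepsilon}$ is never invoked in your sketch, and it is precisely the ingredient from which $1/33$ arises.

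Beyond being incomplete, the route you set up has a quantitative obstruction that averaging alone will not fix. Your global pigeonhole over colour triples keeps $\geq \mu^{5/3-\varepsilon}/6$ pairs of one triple, but since each root $v$ contributes at most $|N^{(2)}(v)|\leq\mu^{2/3+2\varepsilon}$ pairs, this only guarantees about $\mu^{1-3\varepsilon}$ distinct roots $v$ attached to the selected triple --- asymptotically short of the $\mu/7$ vertices you must cover. Covering $\mu/7$ instead by $\bigcup_{p}N(p)$ over captured witnesses would require the captured neighbourhoods to be nearly disjoint, which you neither prove nor can read off the hypotheses (the pairwise condition bounds $|N(N(u)\cap N(v))|$, not $|N(u)\cap N(v)|$). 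The paper's proof avoids this loss by reversing the order of steps: it first samples $S\subseteq V(H)$ at rate $\mu^{-(1/3+\varepsilon)}$ and $\widetilde{S}\subseteq N^{(2)}(r)$ at rate $\mu^{-3\varepsilon}$, shows that if no vertex satisfies outcome 3 then every $v$ in a large set $V'$ has many ``fruitful'' buckets, and deduces that with high probability \emph{every} $v\in V'$ acquires a captured witness $a\in N(v)$ with a $v$-deducing $x\in S$ and $n_r(a)\in\widetilde{S}$; the pairwise condition is used exactly here, to show $|\{n_r(a):a\in A_v\}|\gtrsim\mu^{3\varepsilon}\log\mu$ so that $\widetilde{S}$ hits it, and this is where $\varepsilon<1/33$ enters. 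Only afterwards does it pigeonhole over the six colour pairs, at the level of the vertices $v$ rather than of pairs, so the coverage $|N(N[T]\cap N[\widetilde{T}])|\geq|V'|/6\geq\mu/7$ is automatic. To complete your proposal you would essentially have to restructure it into this per-vertex, sample-first form.
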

    \begin{proof}
        We may assume that $H$ is $3$-colourable, as otherwise outcome 1 of the lemma holds.
        Let $\phi: V(H) \to [3]$ be a proper $3$-colouring of $G$. It remains to show that either outcome 2 or 3 holds for $\phi$.
        
        As in the assumptions of the lemma, let $r \in V(H)$ (arbitrarily). 
        We randomly pick two sets of vertices $S$ and $\widetilde{S}$ as follows.
        \begin{itemize}
            \item Each vertex in $V(H)$ is included in $S$ independently with probability $\mu^{-(1/3 + \varepsilon)}$.
            \item Each vertex in $N^{(2)}(r)$ is included in $\widetilde{S}$ independently with probability $\mu^{-3\varepsilon}$.
        \end{itemize}
        We show that with high probability, either there are (monochromatic) subsets $T$ and $\widetilde{T}$ of $S$ and $\widetilde{S}$ respectively such that the second outcome in the lemma applies, or the third outcome in the lemma applies.  Since $|V(H)| = \mu$ and $|N^{(2)}(r)| \leq \mu^{2/3+2\varepsilon}$, the following claim follows from Chernoff bounds (see \cref{lem:Chernoff}).
        \begin{claim}\label{claim:asymptotic-size-sets}
            With high probability, $|S| \leq 2\mu^{2/3 - \varepsilon}$ and $|\widetilde{S}| \leq 2\mu^{2/3 - \varepsilon}$.
        \end{claim}
        Let $R := V(H) \setminus N^{(3)}(r)$.
        Let $V' \subseteq N^{(3)}(r)$ be the set of all vertices $v \in N^{(3)}(r)$ such that $|R \cap N(v)| \leq \mu^{1/3 - 2\varepsilon}$.
        We show that most of the vertices in $H$ belong to $V'$.        
        \begin{claim}\label{claim:size-of-V'}
            For $\mu$ sufficiently large, $|V'| \geq \mu - 7\mu^{2/3 + 5\varepsilon}$.
        \end{claim}
        \begin{subproof}
            We first observe that by the first three assumptions in the lemma statement, 
\[                |N^{(\leq 2)}[r]| \leq \mu^{1/3 + \varepsilon} + \mu^{2/3 + 2\varepsilon}+1\text{ and } |V(H) \setminus N^{(\leq 3)}(r)| \leq 4\mu^{2/3 + 2 \varepsilon}.\]
This implies that $|R|=|V(H)\setminus N^{(3)}(r)| \leq 6\mu^{2/3 + 2\varepsilon}\leq \mu^{2/3+5\varepsilon}$ for $\mu$ sufficiently large.

            Let $B := N^{(3)}(r) \setminus V'$ be the set of all vertices $v \in N^{(3)}(r)$ such that $|R \cap N(v)| > \mu^{1/3 - 2\varepsilon}$. 
            We show that $|B| \leq 6\mu^{2/3+5\varepsilon}$.
            Let $E' \subseteq E(H)$ be the set of all edges with one endpoint in $B$ and one endpoint in $R$. By the definition of $B$, every vertex $v \in B$ is adjacent to more than $\mu^{1/3 - 2 \varepsilon}$ vertices in $R$. Therefore, $|E'| > \mu^{1/3 - 2\varepsilon} \cdot |B|$. 
            Since each vertex in $H$ has degree at most $\mu^{1/3+\varepsilon}$, we find that
            \[
 \mu^{1/3 - 2\varepsilon} \cdot |B|<|E'|\leq |R|\mu^{1/3+\varepsilon}\leq 6\mu^{2/3 + 2\varepsilon} \cdot \mu^{1/3 + \varepsilon}.
\]This shows that $|B| \leq  6\mu^{2/3 + 5\varepsilon}$.
Since $|V'|=|V(H)| - |R| - |B|$, the claim follows.
        \end{subproof}
        It is not quite enough to show that $V' \subseteq N(N[S] \cap N[\widetilde{S}])$ for the second outcome of the lemma: ideally, the vertices in $S$ and $\tilde{S}$ that are used to achieve this feat also have different colours. 
        For $i\in [3]$, let $S_i=S\cap \phi^{-1}\{i\}$ and $\widetilde{S}_i=\widetilde{S}\cap \phi^{-1}\{i\}$. We will show that with high probability, either the third outcome of the lemma holds, or for all $v\in V'$,
        \begin{equation}
        \label{eq:good_v}
            v\in \bigcup_{i\neq j} N(N(S_i)\cap N(\widetilde{S}_j)).  
        \end{equation}
        The second outcome then follows in the latter case from the previous claims and the pigeonhole principle.

        As in the assumptions of the lemma, for each vertex $a \in N^{(3)}(r)$, we fix a neighbour $n_r(a) \in N^{(2)}(r)$ and for all vertices $v \in V(H)$ and $x \in N^{(2)}(v)$, we fix a vertex $p_v(x)\in N(v)\cap  N(x)$. We call $p_v(x)$ the \emph{favourite parent} of $x$ with respect to $v$. 
        
        For a vertex $v \in V'$, we say that a vertex $x \in N^{(2)}(v)$ is \emph{$v$-deducing} if $p_v(x)\in N^{(3)}(r)$ and $\phi(x)\neq \phi(n_r(p_v(x))$. 
        Note that if moreover $x\in S$ and $n_r(p_v(x))\in \widetilde{S}$, then $p_v(x)\in N(S_i)\cap N(\widetilde{S}_j)$ for $i=\phi(x)$ and $j=\phi(n_r(p_v(x)))$ with $i\neq j$. Hence $v$ satisfies (\ref{eq:good_v}). 
               
        Recall that $n_r(a)$ is only defined for $a\in N^{(3)}(r)$. 
        For each vertex $v \in V'$ and each vertex $a \in N(v)$, we define the \emph{bucket} 
\[
B_a^{(v)} := \{x \in N^{(2)}(v) \, : \, p_v(x) = a\}\subseteq N(a).
\] 
        That is, the bucket $B_a^{(v)}$ is the set of all vertices in the second neighbourhood of $v$ which have $a$ as their favourite parent with respect to $v$.
        For a vertex $v \in V'$, we say that a vertex $a \in N(v)$ is \emph{$v$-fruitful} if $B_a^{(v)}$ contains at least $\mu^{1/3 - 2\varepsilon}$ $v$-deducing vertices. Note that this implies that $a\in N^{(3)}(r)$ by definition of $v$-deducing. Similarly, we call the corresponding bucket $B_a{(v)}$ itself $v$-fruitful as well. 

        \begin{claim}\label{claim:third-outcome}
            If there exists a vertex $v \in V'$ with at most $\mu^{1/3 - 2\varepsilon}$ $v$-fruitful buckets, then vertex $v$ satisfies the third outcome of the lemma.
        \end{claim}
        \begin{subproof}
        Suppose that $v\in V'$ has at most $\mu^{1/3 - 2\varepsilon}$ vertices $a \in N(v)\cap N^{(3)}(r)$ that are $v$-fruitful.
        We need to show that $|\{x\in N^{(2)}(v):p_v(x) \not\in N^{(3)}(r)$ or $\phi(n_r(p_v(x))) \neq \phi(x)\}|\leq 3  \mu^{2/3 - \varepsilon}$. 

        We first show that for each $v\in V'$, we have
            $|\{x\in N^{(2)}(v):p_v(x) \not \in N^{(3)}(r)\}|\leq \mu^{2/3-\varepsilon}$.
        Since $v\in V'$, by definition $|N(v)\setminus N^{(3)}(r)|\leq \mu^{1/3-2\varepsilon}$. For each vertex $y\in N(v)\setminus N^{(3)}(r)$, there are at most $|N(y)|\leq \mu^{1/3+\varepsilon}$ vertices $x$ with $p_v(x)=y$. Hence there are at most $\mu^{1/3-2\varepsilon}\mu^{1/3+\varepsilon}=\mu^{2/3-\varepsilon}$ vertices $x$ with $p_v(x) \not\in N^{(3)}(r)$. 

    It now suffices to show that $|\{x \in N^{(2)}(v):x\text{ is $v$-deducing}\}|\leq 2 \mu^{2/3 - \varepsilon}$. 
    Since each vertex in $H$ has degree at most $\mu^{1/3 + \varepsilon}$,
\begin{itemize}
    \item there are at most $\mu^{1/3 + \varepsilon}$ buckets $B_a^{(v)}$ with $a\in N(v)$, and
    \item $|B_a^{(v)}|\leq \mu^{1/3 + \varepsilon}$ (since $B_a^{(v)}\subseteq N(a)$). 
\end{itemize}
 Our assumption on $v$ is that at most $\mu^{1/3 - 2\varepsilon}$ buckets are $v$-fruitful. The $v$-fruitful buckets together thus contain at most $\mu^{1/3-2\varepsilon} \cdot \mu^{1/3 + \varepsilon} = \mu^{2/3-\varepsilon}$ $v$-deducing vertices. 
    Moreover, each bucket that is not $v$-fruitful contains at most $\mu^{1/3-2\varepsilon}$ $v$-deducing vertices, and so the buckets that are not $v$-fruitful also contain at most $\mu^{1/3-2\varepsilon} \cdot \mu^{1/3+\varepsilon} = \mu^{2/3 - \varepsilon}$ $v$-deducing vertices. Hence $N^{(2)}(v)$ contains at most $2\mu^{2/3 - \varepsilon}$ $v$-deducing vertices, as desired.
        \end{subproof}
        For each vertex $v \in V'$, let $A_{v} \subseteq N(v)\cap N^{(3)}(r)$ be the set of $v$-fruitful vertices $a$ for which $B_a^{(v)} \cap S$ contains at least one $v$-deducing vertex $x$. In particular, $A_v$ is a random variable that depends on $S$. 
        We will later analyse the probability that $n_r(p_v(x))=n_r(a) \in \widetilde{S}$.
        \begin{claim}\label{claim:size-of-Av}
            With high probability, for all vertices $v \in V'$, $|A_{v}| \geq \mu^{1/3 - 5 \varepsilon}/(4\log\mu)$. 
        \end{claim}
        \begin{subproof}
            We first show that, with high probability, each bucket $B_a^{(v)}$ contains at most $2\log \mu$ vertices from $S$ for each $a\in N(v)$.
            Recall that $n:=|B_a^{(v)}| \leq |N(a)| \leq \mu^{1/3 + \varepsilon}$. Each element of $B_a^{(v)}$ is included in $S$ independently with probability $p:=\mu^{-(1/3+\varepsilon)}$. Let $X:=|B_a^{(v)}\cap S|$. Then $\mathbb{E}[X]=np\leq 1$. So by the variant of Chernoff bounds with an upper bound on the expected value (\cref{lem:Chernoff-upper-bound-variant} with $\delta=2\log \mu -1$ and $b=1$), and as for $\mu$ sufficiently large we have that $\delta^2/(2+\delta)\geq 0.9(\delta+1)$,
            \[
            \mathbb{P}(|B_a^{(v)}\cap S| > 2\log\mu )= \mathbb{P}(|B_a^{(v)}\cap S| >1+ \delta) \leq \exp(-0.9(\delta+1))= \mu^{-1.8}.
            \]
            There are at most $|N(v)|\leq \mu^{1/3+\varepsilon}$ buckets. So by the union bound, the probability that a bucket $B_a^{(v)}$ (for some $v\in V'$ and $a\in N(v)$) contains more than $2\log \mu $ vertices from $S$ is at most
            $\mu\cdot \mu^{1/3+\varepsilon}\cdot \mu^{-1.8}$, which tends to $0$ as $\mu \to \infty$.

        By \cref{claim:third-outcome}, we may assume that for all vertices $v \in V'$, there are at least $\mu^{1/3 - 2\varepsilon}$ $v$-fruitful vertices $a \in N(v)\cap N^{(3)}(r)$, meaning that that the bucket $B_a^{(v)}$ contains at least $\mu^{1/3-2\varepsilon}$ $v$-deducing vertices. Since these buckets are by definition disjoint, there exist at least $\mu^{2/3 - 4 \varepsilon}$ $v$-deducing vertices $x \in N^{(2)}(v)$ such that $p_v(x)$ is $v$-fruitful. Let $X_v$ denote the number of such vertices $x$ that are part of $S$. Each such $x$ is part of $S$ independently with probability $\mu^{-(1/3+\varepsilon)}$ and so $\mathbb{E}[X_v]\geq \mu^{1/3-5\varepsilon}$.  Hence, by Chernoff bounds (the first inequality of \cref{lem:Chernoff} with $\delta=1/2$), $\mathbb{P}(X_v\leq \mu^{1/3-5\varepsilon}/2)\leq \mu^{-2}$ for $\mu$ sufficiently large, using that $1/3-5\varepsilon>0$. So with high probability, for each $v\in V'$, the pigeonhole principle implies that there are at least $\frac{1}{2}\mu^{1/3 - 5\varepsilon} / (2\log \mu)$ different buckets that contain $v$-deducing vertices in $S$. Hence, with high probability, $|A_v| \geq \mu^{1/3 - 5\varepsilon}/(4\log \mu)$.
        \end{subproof}
        If for a vertex $v \in V'$ one of the vertices in $\{n_r(a) \, : \, a \in A_v\}$ is included in $\widetilde{S}$, then $v$ satisfies (\ref{eq:good_v}) as desired. We therefore next show that the set $\{n_r(a) \, : \, a \in A_v\}$ is sufficiently large.
        \begin{claim}\label{claim:size-of-nr-set}
            With high probability, for all vertices $v \in V'$, we have $|\{n_r(a) \, : \, a \in A_v\}| \geq 4\mu^{3\varepsilon}\log \mu$.
        \end{claim}
        \begin{subproof}            
            We show that $|\{n_r(a) \, : \, a \in A_v\}| \geq 20\mu^{3\varepsilon}\log \mu$ for any vertex $v$ with $|A_v| \geq \mu^{1/3 - 5\varepsilon}/(4\log  \mu)$. The claim then follows from \cref{claim:size-of-Av}.
            
            Suppose towards a contradiction that there is a vertex $v$ with $|A_v| \geq \mu^{1/3 - 5\varepsilon}/(4\log  \mu)$ yet $|\{n_r(a) \, : \, a \in A_v\}| < 4\mu^{3 \varepsilon}\log  \mu$.
            For $y\in N^{(2)}(r)$, let $A_v(y):= \{a \in A_v \, : \, n_r(a) = y\}$.
            By the pigeonhole principle, there exist a vertex $y \in N^{(2)}(r)$ such that $|A_v(y)|> \mu^{1/3-5\varepsilon} / ( \mu^{3\varepsilon}(4\log \mu)^2) = \mu^{1/3 -8 \varepsilon} / (4\log  \mu)^2$.  

            By definition of $A_v$, we have that each $a \in A_v(y)$ is $v$-fruitful, and hence $|B_a^{(v)}| \geq \mu^{1/3 - 2\varepsilon}$ for each $a \in A_v(y)$. Since all buckets are disjoint,            
            \begin{align*}
                \left|N(A_v(y))\right| &\geq \left| \bigcup_{a \in A_v(y)} B_{a}^{(v)}\right|\\
                &= \sum_{a \in A_v(y)} \left| B_a^{(v)} \right|\\
                &> \mu^{1/3 - 2\varepsilon} \cdot \mu^{1/3 - 8\varepsilon} /(4\log  \mu)^2\\
                &= \mu^{2/3 - 10 \varepsilon}/(4\log  \mu)^2.
            \end{align*}
            Each vertex in $A_v(y)$ is a neighbour of both $y$ and $v$ and so
              \[\left|N(N(y) \cap N(v))\right| \geq |N(A_v(y))| >  \mu^{2/3 - 10 \varepsilon}/(4\log  \mu)^2.\]
            Note that
            \[
            2/3-10\varepsilon>  1/3+\varepsilon \iff 1/3>11\varepsilon\iff \varepsilon<1/33.
            \]
            For $\mu$ sufficiently large, as $\varepsilon<1/33$, we found above that $|N(N(y)\cap N(v))|> \mu^{1/3+\varepsilon}$. 
            However, by the fourth assumption in the lemma statement, $|N(N(y)\cap N(v))|\leq \mu^{1/3+\varepsilon}$. This gives a contradiction.
        \end{subproof}
        
        Let $v\in V'$. Then by the previous claim we may assume that
        $|\{n_r(a) \, : \, a \in A_v\}| \geq 20\mu^{3\varepsilon}\log \mu$.
        Since each vertex in $N^{(2)}(r)$ is included in $\widetilde{S}$ independently with probability $\mu^{-3\varepsilon}$, by Chernoff bounds (\cref{lem:Chernoff} with $X=|\widetilde{S}\cap  \{n_r(a) \, : \, a \in A_v\}|$, $np\geq 20\log \mu$ and $\delta=1/2$), for $\mu$ sufficiently large
        \[
        \mathbb{P}(|\widetilde{S}\cap  \{n_r(a) \, : \, a \in A_v\}| < 10\log \mu)\leq \exp(-20\log \mu/12)\leq  \mu^{-1.1}.
        \]
        In particular, by a union bound, with high probability for each $v\in V'$ we find that $\widetilde{S}\cap  \{n_r(a) \, : \, a \in A_v\}\neq \emptyset$.

        We next spell out the final pigeonhole principle application.
        For each vertex $v\in V'$, we assign a pair of colours $(i,j)\in [3]^2$ with $i\neq j$ as follows. Let $a\in A_v$ with $n_r(a)\in \widetilde{S}$ and $x\in B_a^{(v)}\cap S$ be a $v$-deducing vertex. Let $i=\phi(x)$ and $j=\phi(n_r(a))=\phi(n_r(p_v(x)))$. By the definition of $v$-deducing, $i\neq j$.

        By the pigeonhole principle, there exists a pair $(i,j)$ which is assigned to $v$ for at least $\frac16|V'|$ vertices $v\in V'$. Let
        \begin{align*}
            T &:= \{ v \in S \,  :\, \phi(v) = i\},\\
            \widetilde{T} &:= \{v \in \widetilde{S} \, : \, \phi(v) = j\}.
        \end{align*}
        By \cref{claim:asymptotic-size-sets}, as $T \subseteq S$ and $\widetilde{T} \subseteq \widetilde{S}$, we have that $|T|, |\widetilde{T}| \leq 2 \mu^{2/3 - \varepsilon}$. By definition, $T$ and $\widetilde{T}$ are monochromatic under $\phi$ for different colours and \[
        |N(N[T] \cap N[\widetilde{T}])| \geq \frac16 |V'| \geq \frac16 (\mu -9\mu^{2/3+5\epsilon})\geq \frac17 \mu
        \] by Claim~\ref{claim:size-of-V'} for $\mu$ sufficiently large. This means the second outcome of the lemma holds.
        We define $\ell$ such that all `for all $\mu$ sufficiently large' statements hold. Note that $\ell$ only depends on our choice of $\varepsilon$.
    \end{proof}

    The constant $\ell$ given in the statement of \cref{lem:magic-lemma} is the value we will use for constant $c$ in reduction rule R4. 

    Based on \cref{lem:magic-lemma}, we introduce one final branching rule. 
    This branching rule is only applied if none of the previous reduction and branching rules are applicable. Namely, if the instance is reduced, $H=G[L_3]$ satisfies the conditions of \cref{lem:magic-lemma} (as shown in the proof of \cref{lem:correctnessB5}).

    \begin{itemize}
        \item[{\textbf{B5}}] If $(G, L)$ is reduced, we create a number of new instances. 
        
        As in the statement of \cref{lem:magic-lemma}, fix $r \in V(H)$,  neighbours $n_r(a) \in N_{L_3}^{(2)}(r)$ for vertices $a \in N_{L_3}^{(3)}(r)$, and vertices $p_v(x) \in N_{L_3}(v) \cap N_{L_3}(x)$ for $v \in L_3$ and $x \in N_{L_3}^{(2)}(v)$. 

        First, for each pair of disjoint subsets $T, \widetilde{T} \subseteq L_3$ such that each has size at most $2 \mu^{2/3-\varepsilon}$ and $|N_{L_3}(N_{L_3}[T] \cap N_{L_3}[\widetilde{T}])| \geq  \mu/7$, we create six new branches: one for each pair of colours $(i, j) \in [3]^2$ such that $i \neq j$. In each of the six respective branches, assign each vertex in $T$ the colour $i$, and each vertex in $\widetilde{T}$ the colour $j$. These branches correspond to the second outcome of \cref{lem:magic-lemma}.

        Second, we create branches corresponding to the third outcome of \cref{lem:magic-lemma}. For each vertex $v \in N_{L_3}^{(3)}(r)$, we create a branch for each possible valid combination of a 3-colouring of $N_{L_3}[v]$, a 3-colouring of $\{ n_r(a)\, : \, a \in N_{L_3}(v)\}$, a set $S \subseteq N_{L_3}^{(2)}(v)$ of size at most $3\mu^{2/3 - \varepsilon}$ such that $\{x \in N_{L_3}^{(2)}(v) \, : \, p_v(x) \not \in L_3\setminus N^{(3)}(r)\} \subseteq S$, and each 3-colouring of $S$. In these branches, colour the vertices of \[
        N_{L_3}[v]\cup \{ n_r(a)\, : \, a \in N_{L_3}(v)\}\cup S
        \]
        according to the colourings defining the branch. This gives a partial colouring $\phi$. We extend $\phi$ to the remaining vertices $x \in N_{L_3}^{(2)}(v) \setminus S$ via $\phi(x):= \phi(n_r(p_v(x)))$.
    \end{itemize}
    We establish the correctness of this final branching rule in the appendix (see Lemma~\ref{lem:correctnessB5}). 

    While branching rule B5 creates many new branches every time it is applied, its main purpose is to reduce the size of $L_3$. It indeed does so significantly on every application, as shown in the following lemma.

    \begin{lemma}\label{lem:size-of-L3-after-applying-B5}
        Let $(G,L)$ be a reduced instance and let $\mu=|L_3|$. Then, in the new instances created by applying branching rule B5, after exhaustively applying the reduction rules, the number of vertices of list size $3$ is 
        \begin{enumerate}
            \item at most $(6/7)\cdot \mu$ in the branches corresponding to the second outcome of \cref{lem:magic-lemma}, and
            \item at most $4\mu^{2/3 + 2\varepsilon}$ in the branches corresponding to the third outcome of \cref{lem:magic-lemma}.
        \end{enumerate}
    \end{lemma}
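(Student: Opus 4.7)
The plan is to argue the two cases separately according to which outcome of \cref{lem:magic-lemma} the branch implements.

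For a branch of type (1) (outcome 2), $T$ is assigned colour $i$ and $\widetilde{T}$ colour $j$ with $i \neq j$. I would first show that after exhaustive application of R3, every vertex $w \in N_{L_3}[T] \cap N_{L_3}[\widetilde{T}]$ has a singleton list: if $w \in T$ or $w \in \widetilde{T}$ this holds by construction (using disjointness of $T$ and $\widetilde{T}$ so that the two assignments do not conflict); otherwise $w$ has a neighbour coloured $i$ and a neighbour coloured $j$, so R3 strips both colours from $L(w)$. Then every vertex of $N_{L_3}(N_{L_3}[T] \cap N_{L_3}[\widetilde{T}])$ has a singleton-list neighbour, and so R3 removes at least one colour from its list, demoting it out of $L_3$. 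As B5 only creates this branch when that set has size at least $\mu/7$, the new $|L_3|$ is at most $\mu - \mu/7 = (6/7)\mu$.

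For a branch of type (2) (outcome 3), let
\[
C := N_{L_3}[v] \cup \{n_r(a) : a \in N_{L_3}(v)\} \cup N_{L_3}^{(2)}(v)
\]
denote the set of vertices coloured directly by the branch (note that the $\phi(x) = \phi(n_r(p_v(x)))$ extension also lands inside $C$ since $p_v(x) \in N_{L_3}(v)$). The key observation is $C \supseteq N_{G[L_3]}^{(\leq 2)}[v]$, so one sweep of R3 pushes every $L_3$-neighbour of $C$ out of $L_3$, and the residual $L_3$-vertices must all lie in $L_3 \setminus N_{G[L_3]}^{(\leq 3)}[v]$. Applying \cref{lem:near-diameter-3} to $v$ bounds this remnant by $4\mu^{2/3 + 2\varepsilon}$, as required.

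The main thing to be careful about is the convention that $N_{L_3}(X)$ excludes $X$, so that the $\mu/7$ vertices contributing to the reduction in the first case are genuinely outside $N_{L_3}[T] \cap N_{L_3}[\widetilde{T}]$ and hence each one really leaves $L_3$ (rather than being double-counted with the vertices that were already assigned singleton lists). Since branching rule B5 does not alter $G$, the hypotheses of \cref{lem:near-diameter-3} and of the other reduction/branching rules are preserved in the new instances, which is what lets us appeal to the near-diameter-3 bound within each branch in the second case.
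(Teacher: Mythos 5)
Your proof is correct and follows essentially the same route as the paper's: in the outcome-2 branches the vertices of $N_{L_3}[T]\cap N_{L_3}[\widetilde{T}]$ become determined so their at least $\mu/7$ outside $L_3$-neighbours each lose a colour, and in the outcome-3 branches all of $N_{L_3}^{(\leq 2)}[v]$ is coloured so everything in $N_{L_3}^{(\leq 3)}[v]$ leaves $L_3$ and \cref{lem:near-diameter-3} bounds the remainder by $4\mu^{2/3+2\varepsilon}$. You merely spell out a few steps (the case analysis for singleton lists and the containment of the coloured set) that the paper leaves implicit.
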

    \begin{proof}
        For the branches corresponding to the second outcome of \cref{lem:magic-lemma}, since $T$ and $\widetilde{T}$ are assigned different colours, the colour of each vertex in $N_{L_3}[T] \cap N_{L_3}[\widetilde{T}]$ is determined after applying the reduction rules exhaustively. Hence, each vertex in $N_{L_3}(N_{L_3}[T] \cap N_{L_3}[\widetilde{T}])$ loses a colour from its list. By the definition of branching rule B5 this set has size at least $\mu / 7$, and thus the created new instances each have at least $\mu/7$ fewer vertices of list size $3$, as desired.

        Next we consider the branches corresponding to the third outcome of \cref{lem:magic-lemma}. In each of these branches, for some vertex $v \in N_{L_3}^{(3)}$ we colour all of $N_{L_3}^{(\leq 2)}[v]$. Thus, after applying the reduction rules exhaustively, none of the vertices in $N_{L_3}^{(\leq 3)}[v]$ still have list size $3$. Hence, using \cref{lem:near-diameter-3}, the resulting instance has at most $4\mu^{2/3 + 2\varepsilon}$ vertices of list size $3$, as desired.
    \end{proof}

    \section{Running time analysis}

    Note that branching rule B5 always applies if the instance is in a reduced state. Since an instance is by definition reduced if and only if none of the other branching and reduction rules may be applied, the branching algorithm is guaranteed to terminate. Hence, by the correctness of the reduction and branching rules, the algorithm correctly determines whether the input graph is $3$-colourable and constructs a valid $3$-colouring if it exists. The fifth branching rule depends on the choice of $\varepsilon$. We show that for each $\delta\in (0,1/33)$, there exists an $\varepsilon\in (0,1/33)$ such that the branching algorithm using this $\varepsilon$ will run in time $2^{O(n^{2/3-\delta})}$ in the appendix (see Lemma~\ref{lem:running-time}). This proves \cref{thm:main-theorem}.
    
    We briefly sketch the runtime analysis here. We create a recurrence relation for each of the branching rules capturing the number of new instances it creates. For each of the branching rules B1-B4, at most one instance is created with one fewer vertex of list-size $3$, and a constant number (say $k$) of instances where the number of vertices in $L_3$ drops by at least $\mu^{1/3 + \varepsilon}$. If we look at the number of leaves in the corresponding tree (that is, the total number of instances created following rules B1-B4), we can describe each leaf by the path from the root. This will be a sequence with symbols in $\{0,1,\dots,k\}$, say, of length at most $n$ in this case (an upper bound for $\mu$), and at most $n^{2/3 - \varepsilon}$ of the entries are nonzero (``the big steps,'' whereas the label $0$ is used for branches along which the size decreases by $1$). This gives ${n \choose n^{2/3 - \varepsilon}} \cdot k^{n^{2/3 - \varepsilon}}$ as upper bound. Moreover, whereas branching rule B5 generates many more new instances, each of them drops the number of vertices in $L_3$ by either a constant factor or an exponential factor, which sufficiently counteracts the exponential explosion in the number of instances.

\section{Conclusion}
We made progress on one of the two `notorious'~\cite{Paulusma} open cases of the complexity of $k$-colouring on graphs of bounded diameter, by providing a faster algorithm for $3$-colouring diameter-3 graphs. Assuming the Exponential Time Hypothesis (ETH), 3-colouring cannot be solved in time $2^{o(n)}$ on diameter-4 graphs with $n$ vertices~\cite{MertziosSpirakis16}, nor can $k$-colouring be solved in time $2^{o(n)}$ on diameter-2 graphs for $k\geq 4$. The most intriguing open problem is whether diameter-2 graphs can be 3-coloured in polynomial time. We therefore explicitly highlight the following problem.
\begin{problem}
    Can \textsc{3-colouring} be solved on diameter-2 graphs in quasi-polynomial time? 
\end{problem}
We provided, for any $\varepsilon<1/33$, an algorithm for 
\textsc{3-colouring} on diameter-3 graphs that runs in time $2^{O(n^{2/3-\varepsilon})}$. The limiting factor on $1/33$ in our proof occurs in the proof of \cref{claim:size-of-nr-set}. Before our work, the exponent of $2/3$ could have been a natural guess for the optimal exponent, with the ETH-based lower bound on the exponent of $1/2$ by  Mertzios and Spirakis~\cite{MertziosSpirakis16} being another natural option.

\printbibliography

\section{Appendices}

\subsection{Correctness of branching rules B1-B4}
    \vspace{7mm}
        \begin{lemma}\label{lem:correctnessB1-B4}
        Let $(G, L)$ be a \threelistcol instance. For each of the branching rules B1-B4, at least one of the branches resulting from applying the branching rule is a \yes-instance if and only if $(G, L)$ is a \yes-instance.
    \end{lemma}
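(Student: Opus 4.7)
The plan is to treat the four rules separately and, in each case, verify the two directions: (i) if $(G,L)$ admits a proper $L$-colouring $\phi$, then at least one branch inherits a consistent restriction of $\phi$ and is therefore a \yes-instance; (ii) any \yes-instance among the branches produces a proper $L$-colouring of the original $(G,L)$. The branches for B1--B3 are all obtained by shrinking the list of a single vertex, so both directions are essentially bookkeeping; the only genuinely subtle case is the merge branch in B4.

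For \textbf{B1} and \textbf{B2}, the rule creates (at most) three branches $(G, L^{(c)})$ indexed by $c \in L(v)$, where $L^{(c)}$ agrees with $L$ except that $L^{(c)}(v) = \{c\}$. I will observe that any proper $L$-colouring assigns some colour $c^* \in L(v)$ to $v$, so the branch $(G, L^{(c^*)})$ is a \yes-instance; conversely, a proper $L^{(c)}$-colouring is also a proper $L$-colouring since $L^{(c)} \subseteq L$ pointwise. For \textbf{B3}, the same argument applies with only two branches, $(G, L^{(c)})$ and $(G, L')$ where $L'(v) = L(v) \setminus \{c\}$: the two branches partition the space of possible colours for $v$, and both tighten the list of $v$ only. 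The role of Lemma~\ref{lem:colour-option-path-via-L2-vertices} is not needed for correctness here, only for the progress analysis.

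For \textbf{B4}, if $u$ and $v$ are adjacent then any proper colouring gives them distinct colours, so the six branches (one per ordered pair $(c_u, c_v) \in [3]^2$ with $c_u \neq c_v$, tightening $L(u) = \{c_u\}$ and $L(v) = \{c_v\}$) exhaust all cases, and the two directions follow as in B1. If $u$ and $v$ are non-adjacent, I need to handle the seventh, merging branch, which covers the case $\phi(u) = \phi(v)$. I will describe the merge explicitly: replace $u$ and $v$ by a single vertex $w$ with neighbourhood $N(u) \cup N(v)$ and list $L(u) \cap L(v)$, leaving all other vertices and lists untouched; call the resulting instance $(G', L')$. The map sending a proper $L$-colouring $\phi$ of $G$ with $\phi(u) = \phi(v) = c$ to the colouring $\phi'$ of $G'$ defined by $\phi'(w) = c$ and $\phi'(x) = \phi(x)$ otherwise is well-defined (since $c \in L(u) \cap L(v) = L'(w)$ and no edge of $G'$ incident to $w$ is violated) and is a bijection onto the set of proper $L'$-colourings of $G'$. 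Hence $(G', L')$ is a \yes-instance exactly when $(G, L)$ admits a proper colouring with $\phi(u) = \phi(v)$, completing the partition.

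The only point that needs care is this last equivalence, in particular checking that the merge does not accidentally produce edges that force a conflict absent from the original: since $u$ and $v$ are non-adjacent, no self-loop is created at $w$, and any neighbour of $w$ in $G'$ is already a neighbour of $u$ or of $v$ in $G$, so the constraints on $\phi'(w)$ in $G'$ are exactly the constraints on the common colour $c$ of $u$ and $v$ in $G$. The lists $L(u)$ and $L(v)$ restricting to $L(u) \cap L(v)$ for $w$ exactly captures the requirement $c \in L(u)$ and $c \in L(v)$. With this bijection in hand, the ``if and only if'' for B4 follows, and the lemma is proved.
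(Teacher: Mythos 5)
Your proof is correct and follows essentially the same route as the paper's: for B1--B3 the branches simply exhaust the possible colours (or colour/non-colour choice) for $v$, and for B4 you split on whether $\phi(u)=\phi(v)$, with the merge branch handling the equal-colour case via the same pull-back/push-forward correspondence the paper uses. Your additional observations---that \cref{lem:colour-option-path-via-L2-vertices} is only needed for the progress analysis of B3, and the explicit description of the merged instance and its colouring bijection---are consistent with the paper and just make the argument more explicit.
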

    \begin{proof}
        We proceed by case distinction on which branching rule is applied.
        \begin{description}
            \item[Cases B1-B3:] Follow from a proper $L$-colouring having to assign some colour in $L(v)$ to $v$.
            \item[Case B4:] If one of the branches where $u$ and $v$ are assigned different colours is a \yes-instance, the correctness is immediate. It remains to consider the case where $u$ and $v$ are non-adjacent and they are merged. Let $G'$ be the graph resulting from merging $u$ and $v$ in $G$, let $x$ be the vertex resulting from merging $u$ and $v$, and let $L': V(G') \to 2^{[3]}$ be the corresponding list assignment. If $(G', L')$ is a \yes-instance, let $\phi ': V(G') \to [3]$ be an $L'$-colouring of $G'$. By setting $\phi(y)=\phi'(y)$ for $y\in V(G)\setminus \{u,v\}=V(G')\setminus \{x\}$, $\phi(u):=\phi(x)$ and $\phi(v) := \phi(x)$, we obtain a valid $L$-colouring of $G$, as $u$ and $v$ are non-adjacent. 
            
            On the other hand, if $(G, L)$ is a \yes-instance, let $\phi$ be a valid $L$-colouring of $G$. If $\phi(u) = \phi(v)$, then the branch resulting from merging $u$ and $v$ will be a \yes-instance, witnessed by setting $\phi(x)$ for $x$ the vertex resulting from merging $u$ and $v$ to equal $\phi(u) = \phi(v)$. For the case where $\phi(u) \neq \phi(v)$ it follows directly that there is a branch where $u$ is assigned $\phi(u)$ and $v$ is assigned $\phi(v)$ which must be a \yes-instance.
        \end{description}
    \end{proof}

\subsection{Correctness of branching rule B5}
    \vspace{7mm}

\begin{lemma}\label{lem:correctnessB5}
        Let $(G,L)$ be a \threelistcol instance. For branching rule B5, at least one of the branches created by applying the branching rule is a \yes-instance if and only if $(G, L)$ is a \yes-instance.
    \end{lemma}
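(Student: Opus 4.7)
The plan is to prove the two directions of the equivalence separately. The forward direction is essentially immediate: each branch created by B5 is obtained from $(G,L)$ by fixing certain vertices to specific colours (a partial $L$-colouring) and then repeatedly applying R3, which only shrinks lists. Any proper $L'$-colouring of a branch extends to a proper $L$-colouring of $(G,L)$ by restoring the fixed colours, so if some branch is a \yes-instance then so is $(G,L)$.

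The real content is the converse direction. Starting from a proper $L$-colouring $\phi$ of $(G,L)$, I would exhibit a branch that $\phi$ witnesses by applying \cref{lem:magic-lemma} to $H := G[L_3]$ with the $3$-colouring $\phi|_{L_3}$, using the same choices of $r$, $n_r(\cdot)$ and $p_v(\cdot)$ that were fixed when applying B5. So the first step is to verify that $H$ satisfies all hypotheses of the magic lemma with $\mu = |L_3|$: the size lower bound $\mu \geq \ell$ follows from R4 being inapplicable (with $c = \ell$); the degree bound $|N_H(v)| \leq \mu^{1/3+\varepsilon}$ follows from B1 being inapplicable; the second-neighbourhood bound $|N_H^{(2)}(v)| \leq \mu^{2/3+2\varepsilon}$ is the product of two such degree bounds; the near-diameter bound $|N_H^{(\leq 3)}(v)| \geq \mu - 4\mu^{2/3+2\varepsilon}$ is exactly \cref{lem:near-diameter-3}; and the common-neighbour bound $|N_H(N_H(u) \cap N_H(v))| \leq \mu^{1/3+\varepsilon}$ follows from B4 being inapplicable, using the inclusion $N_H(N_H(u) \cap N_H(v)) \subseteq N_{L_3}(N(u) \cap N(v))$.

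Next I would invoke the final ``moreover'' clause of \cref{lem:magic-lemma}: since $\phi|_{L_3}$ is a proper $3$-colouring of $H$, either outcome 2 or outcome 3 of the lemma holds specifically for this colouring. In the outcome-2 case, the witnessing sets $T, \widetilde{T}$ together with the colour pair $(i,j) = (\phi(T), \phi(\widetilde{T}))$ (which are single colours since $T, \widetilde{T}$ are monochromatic and coloured differently by $\phi$) correspond exactly to one of the branches in the first family of B5, and $\phi$ itself extends the partial colouring defining that branch. In the outcome-3 case, let $v$ be the witnessing vertex and take $S$ to be precisely the ``bad'' set $\{x \in N_{L_3}^{(2)}(v) : p_v(x) \notin N^{(3)}(r) \text{ or } \phi(n_r(p_v(x))) \neq \phi(x)\}$, which has size at most $3\mu^{2/3-\varepsilon}$ by outcome 3 and which contains the set of $x$ with $p_v(x) \notin N^{(3)}(r)$ as required by B5. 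The B5 branch indexed by this $v$, this $S$, and the restrictions of $\phi$ to $N_{L_3}[v]$, $\{n_r(a) : a \in N_{L_3}(v)\}$ and $S$ is then witnessed by $\phi$; the one nontrivial check is that the extension rule $\phi(x) := \phi(n_r(p_v(x)))$ used in B5 for $x \in N_{L_3}^{(2)}(v) \setminus S$ agrees with the true $\phi(x)$, and this holds by the very definition of $S$.

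The main obstacle is the verification of the hypotheses of \cref{lem:magic-lemma}, in particular the common-neighbour bound where one has to notice that the set on which B4 gives a bound is a superset of the set appearing in the lemma hypothesis. Everything else is a direct translation between the outcomes of the magic lemma and the two families of branches defined in B5, and the final ``moreover'' clause of the magic lemma is what guarantees that the outcome matches the \emph{specific} colouring $\phi$ rather than merely some unspecified $3$-colouring of $H$.
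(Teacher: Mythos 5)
Your proposal is correct and follows essentially the same route as the paper: verify that reducedness (B1, B4, R4, and Lemma~\ref{lem:near-diameter-3}) gives the hypotheses of \cref{lem:magic-lemma} for $H=G[L_3]$, then use the ``moreover'' clause to match the given colouring $\phi$ to a branch of the first family (outcome 2) or to the branch indexed by the special vertex $v$ with $S$ the ``bad'' set (outcome 3), the forward direction being immediate. Your explicit check that the extension rule $\phi(x):=\phi(n_r(p_v(x)))$ agrees with $\phi$ on $N_{L_3}^{(2)}(v)\setminus S$, and the superset observation for the B4 hypothesis, are details the paper leaves implicit, but the argument is the same.
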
    \begin{proof}
        We first claim that if an instance is reduced, $G[L_3]$ satisfies the conditions of \cref{lem:magic-lemma}. Firstly, by branching rule B1, we have $|N_{L_3}(v)| < \mu^{1/3 + \varepsilon}$ for all $v \in L_3$. Applying this inequality to all vertices in $N_{L_3}(v)$ gives us that 
        \[\left|N_{G[L_3]}^{(2)}(v)\right| \leq \sum_{u \in N_{L_3}(v)} |N_{L_3}(u)| < \mu^{1/3 + \varepsilon} \cdot \mu^{1/3 + \varepsilon} =  \mu^{2/3 + 2\varepsilon}.\]
        Moreover, by \cref{lem:near-diameter-3}, $|N_{G[L_3]}^{(\leq 3)}(v)| \geq \mu - 4 \mu^{2/3 + 2\varepsilon}$. 
        
        The final condition of \cref{lem:magic-lemma}, that for all distinct $u,v \in L_3$ we have $|N_{L_3}(N_{L_3} (u) \cap N_{L_3}(v))| \leq \mu^{1/3 + \varepsilon}$, follows from branching rule B4.
    
        Since a valid colouring in any of the created branches would be a valid colouring for the original instance $(G, L)$, it suffices to show that if $(G,L)$ is a \yes-instance, then so is one of the branches. Suppose that $\phi :V(G) \to [3]$ is a valid $L$-colouring of $G$. By \cref{lem:magic-lemma}, $\phi$ restricted to $G[L_3]$ admits structure according to either the second or the third outcome of \cref{lem:magic-lemma}. If the second outcome holds, we note that correctness directly follows from the first set of branches created by B5. 

        Hence, we focus on the third outcome. We claim that this outcome is covered by the second set of branches constructed by B5. Namely, for $v\in L_3$ the special vertex designated in the third outcome of \cref{lem:magic-lemma}, we consider the branch where $N_{L_3}[v]$ and $\{ n_r(a)\, : \, a \in N_{L_3}(v)\}$ are coloured according to $\phi$, and where $S$ is the set of all vertices $x \in N_{L_3}^{(2)}$ such that $p_v(x) \in R$ or $\phi(n_r(p_v(x))) \neq \phi(x)$. By \cref{lem:magic-lemma}, $\phi$ is a valid colouring for this branch, and hence it is a \yes-instance, as desired.
    \end{proof}

\subsection{Detailed running time calculation}
\vspace{7mm}

    \begin{lemma}\label{lem:running-time}
        For every $0<\varepsilon <1/33$ there exists a choice of parameters for the branching algorithm such that it terminates in $2^{O(n^{2/3-\varepsilon})}$.
    \end{lemma}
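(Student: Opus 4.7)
The strategy is to analyse the branching tree via the potential $\mu = |L_3|$ and define $T(\mu)$ as the maximum number of leaves of the tree on an instance with $|L_3| = \mu$; since each tree node involves only polynomial work for reduction rules and branching-condition checks, the total running time is $n^{O(1)} T(n)$. Throughout I will write $\varepsilon'$ for the algorithm's internal parameter (to distinguish it from the target exponent $\varepsilon$ of the lemma) and will choose $\varepsilon' \in (\varepsilon, 1/33)$ at the end. The base case $T(\mu) = O(1)$ for $\mu$ below the constant $\ell$ of \cref{lem:magic-lemma} is handled by reduction rule R4.

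First I would bound the number of leaves generated by B1--B4 alone (before the first application of B5). Each of B1--B4 produces $O(1)$ branches, of which at most one (B3's ``remove $c$ from $L(v)$'' option or B4's merge option) is a \emph{small-drop} branch reducing $\mu$ by only $1$, the rest being \emph{big-drop} branches reducing $\mu$ by $\Omega(\mu^{1/3+\varepsilon'})$. Small-drop steps have branching factor $1$ and so do not multiply leaf counts. On any root-to-leaf path, if the big-drop values are $a_1 > a_2 > \cdots > a_K$, then $a_{j+1} \leq a_j - \Omega(a_j^{1/3+\varepsilon'})$; comparing with the ODE $a' = -a^{1/3+\varepsilon'}$, whose solution satisfies $a(k)^{2/3-\varepsilon'} = \mu^{2/3-\varepsilon'} - \Omega(k)$, gives $K = O(\mu^{2/3-\varepsilon'})$, so the B1--B4 subtree contains at most $2^{O(\mu^{2/3-\varepsilon'})}$ leaves.

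Next I would count the branches produced by B5 at each such leaf. Outcome~2 contributes at most $\binom{\mu}{2\mu^{2/3-\varepsilon'}}^2 \cdot 6 = 2^{O(\mu^{2/3-\varepsilon'}\log\mu)}$ choices of $(T,\widetilde{T},(i,j))$, and outcome~3 contributes at most $\mu \cdot 3^{O(\mu^{1/3+\varepsilon'})} \cdot \binom{\mu^{2/3+2\varepsilon'}}{3\mu^{2/3-\varepsilon'}} \cdot 3^{O(\mu^{2/3-\varepsilon'})} = 2^{O(\mu^{2/3-\varepsilon'}\log\mu)}$ choices of the vertex $v$, the colourings of $N_{L_3}[v]$ and of the $n_r$-vertices, the set $S$, and the colouring of $S$. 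By \cref{lem:size-of-L3-after-applying-B5}, each resulting sub-instance has $|L_3| \leq \max(6\mu/7,\, 4\mu^{2/3+2\varepsilon'})$. Combining yields the master recurrence
\[
T(\mu) \leq 2^{A\,\mu^{2/3-\varepsilon'}\log\mu} \cdot T\!\left(\max\!\bigl(\tfrac{6}{7}\mu,\, 4\mu^{2/3+2\varepsilon'}\bigr)\right)
\]
for some constant $A = A(\varepsilon')$.

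Finally I would solve this recurrence by induction, aiming for $T(\mu) \leq 2^{C\mu^{2/3-\varepsilon}}$ with $C$ chosen large. In the critical $6\mu/7$ branch, the induction step reduces to verifying
\[
A\log\mu \leq C\bigl(1 - (6/7)^{2/3-\varepsilon}\bigr)\,\mu^{\varepsilon' - \varepsilon},
\]
which holds for $\mu$ large enough \emph{precisely} because $\varepsilon' > \varepsilon$; the finite range below the threshold is absorbed by enlarging $C$. The other branch, with $\mu' \leq 4\mu^{2/3+2\varepsilon'}$, contributes $T(\mu') \leq 2^{O(\mu^{(2/3-\varepsilon)(2/3+2\varepsilon')})}$, and for our parameter range $(2/3-\varepsilon)(2/3+2\varepsilon') < 2/3-\varepsilon$, making this case strictly easier. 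Since $\mu \leq n$ initially, the overall running time is $2^{O(n^{2/3-\varepsilon})}$. The main obstacle is exactly the gap $\varepsilon' - \varepsilon > 0$ needed to absorb the $\log\mu$ factor coming from B5's branch count; this is the structural reason why the target exponent must be chosen strictly smaller than the algorithm's parameter.
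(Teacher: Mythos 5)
Your overall architecture is sound and, modulo packaging, it is the same as the paper's: per root-to-leaf path only $O(\mu^{2/3-\varepsilon'})$ ``big'' drops can occur, B5 multiplies the count by $\mu^{O(\mu^{2/3-\varepsilon'})}$ while shrinking $\mu$ by a constant factor (or down to $4\mu^{2/3+2\varepsilon'}$), and a strict gap between the internal parameter and the target exponent absorbs the logarithmic overhead. The paper instead runs one induction proving $F(\mu)\leq C\exp\bigl(\mu^{2/3-\varepsilon}(\log\mu)^2\bigr)$ against all five recurrences and only trades the $(\log\mu)^2$ for a slightly smaller exponent at the very end; your choice of $\varepsilon'\in(\varepsilon,1/33)$ up front plays exactly the role of that final trade.

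There is, however, one step that fails as stated: the claim that the B1--B4 phase produces only $2^{O(\mu^{2/3-\varepsilon'})}$ leaves, justified by ``small-drop steps have branching factor $1$ and so do not multiply leaf counts.'' A node that offers a small-drop child also has between one and six big-drop siblings (and note B2's third branch is a small-drop branch as well, not only B3's list-removal and B4's merge), so a leaf is determined not just by which of the at most $K=O(\mu^{2/3-\varepsilon'})$ big drops are taken but also by \emph{where} along a path of length up to $\mu$ they occur. The correct count is of order $\sum_{k\leq K}\binom{\mu}{k}6^k=2^{\Theta(K\log\mu)}$, and this is genuinely attained: already the recurrence $T(\mu)=T(\mu-1)+T\bigl(\mu-\mu^{1/3+\varepsilon'}\bigr)$ grows like $\exp\bigl(\Theta(\mu^{2/3-\varepsilon'}\log\mu)\bigr)$, so no constant $C$ makes a log-free bound $2^{C\mu^{2/3-\varepsilon'}}$ close under B1--B4 (this is precisely why the paper's ansatz carries the $(\log\mu)^2$ factor). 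The error is benign for your plan: replacing your leaf bound by $2^{O(\mu^{2/3-\varepsilon'}\log\mu)}$ feeds into your master recurrence in exactly the form you already wrote, and the inequality $A\log\mu\leq C\bigl(1-(6/7)^{2/3-\varepsilon}\bigr)\mu^{\varepsilon'-\varepsilon}$ absorbs it just as it absorbs B5's factor, so the lemma still follows --- but the justification of that intermediate bound needs to be redone as above rather than by appealing to branching factor $1$.
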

\begin{proof}
        Note that the parameters are encoded in the reduction and branching rules via $\varepsilon$. Let $F(x)$ be the maximum number of instances created by the algorithm in solving an instance $(G,L)$ with $\mu \leq x$. We will bound the running time by considering which branching rule was applied and using induction on $\mu$. For the reader's convenience, we will restate each branching rule as they come up in the analysis.

        \begin{itemize}
            \item[{\textbf{B1}}]\itshape  If there exists a vertex $v \in L_2 \cup L_3$ with $|N_{L_3}(v)| \geq \mu^{1/3 + \varepsilon}$, branch on the colour assigned to $v$. 
        \end{itemize}

        \begin{claim}
            If branching rule B1 was applied, then the number of instances is at most \[3F(\mu - \mu^{1/3 + \varepsilon}) + p(n).\]
        \end{claim}
        \begin{subproof}
            Let $v \in L_2 \cup L_3$ be the vertex such that we branch on the colour assigned to $v$. Note that we create three new instances (one for each colour assigned to $v$), and since $|N_{L_3}(v)| \geq \mu^{1/3 + \varepsilon}$, each of these branches contains at least $\mu^{1/3 + \varepsilon}$ fewer vertices with list size $3$.
        \end{subproof}

        \begin{itemize}
            \item[{\textbf{B2}}] \itshape If there exists a vertex $v \in L_3$ that has via neighbours in $L_2$ at least $\mu^{1/3 + \varepsilon}$ second neighbours in $L_3$, that is, \[|N_{L_3}(N_{L_2}(v))| \geq \mu^{1/3+\varepsilon},\] then branch on the colour assigned to $v$.
        \end{itemize}
        
        \begin{claim}
            If branching rule B2 was applied, the number of instances is at most \[F(\mu - 1) + 2 F(\mu - \mu^{1/3 + \varepsilon}/3) + p(n).\]
        \end{claim}
        \begin{subproof}
            Let $v \in L_3$ be the vertex such that we branch on the colour assigned to $v$. We again create three new instances, one for each colour assigned to $v$. Let $S := N_{L_3}(N_{L_2}(v))$. By the definition of B2, we have $|S| \geq \mu^{1/3 + \varepsilon}$. For every vertex $u \in S$, assign a favourite neighbour $u' \in N_{L_2}(v)$. Because each vertex in $L_2$ has list $\{1, 2\}$, $\{1, 3\}$, or $\{2, 3\}$, there exists a set $S' \subseteq S$ of size at least $|S|/3 \geq \mu^{1/3 + \varepsilon}/3$ such that the lists assigned to the favourite neighbours of all vertices in $S'$ are identical. Without loss of generality, assume that all these favourite neighbours have list $\{1, 2\}$. Then in the branches where $v$ is assigned colour $1$ or $2$, each vertex in $S'$ loses a colour from its list after exhaustively applying the reduction rules. Thus, in those two branches, the newly created instances have at least $\mu^{1/3 + \varepsilon}/3$ fewer vertices of list size $3$. Moreover, in the final branch, vertex $v$ itself no longer has list size $3$. 
        \end{subproof}

        \begin{itemize}
            \item[{\textbf{B3}}] \itshape If there exists a vertex $v \in L_3$ with \[|N_{L_3}(N_{L_2}(N_{L_2}(v)))| \geq \mu^{1/3 + \varepsilon},\] using \cref{lem:colour-option-path-via-L2-vertices} there exists a colour $c \in [3]$ such that if we assign $c$ to $v$, the lists of at least $\mu^{1/3 + \varepsilon}/3$ vertices in $N_{L_3}(N_{L_2}(N_{L_2}(v)))$ decrease in size after repeatedly applying R3. We branch on this colour for $v$: either we assign $v$ the colour $c$, or we remove $c$ from the list $L(v)$.
        \end{itemize}

        \begin{claim}
            If branching rule B3 was applied, the number of instances is at most \[F(\mu - 1) + F(\mu - \mu^{1/3 + \varepsilon}/3) + p(n).\]
        \end{claim}
        \begin{subproof}
            When applying this branching rule on $v\in L_3$, we either remove the colour $c$ from the list of $v$ (decreasing $|L_3|$ by one) or assign the colour $c$ to $v$ (decreasing $|L_3|$ by $\mu^{1/3+\varepsilon}/3$ by choice of $v$ and $c$ in B3).
        \end{subproof}

        \begin{itemize}
            \item[{\textbf{B4}}] \itshape If there exist two distinct vertices $u, v \in L_3$ with \[|N_{L_3}(N(u) \cap N(v))|\geq \mu^{1/3 + \varepsilon},\] we branch on the colours of $u$ and $v$. If $u$ and $v$ are adjacent, we create six branches, one for each of the possible colour assignments for $u$ and $v$ in which they are assigned different colours. If $u$ and $v$ are non-adjacent, we additionally consider a branch where we merge $u$ and $v$ into a single vertex but do not fix any colours.  
        \end{itemize}

        \begin{claim}
            If branching rule B4 was applied, the number of instances is at most \[F(\mu-1) + 6F(\mu - \mu^{1/3 + \varepsilon}) + p(n).\]
        \end{claim}
        \begin{subproof}
            Let $u, v \in L_3$ be the two vertices on whose colours we branch. Let $S := N_{L_3}(N(u) \cap N(v))$. By the definition of B4, we have that $|S| \geq \mu^{1/3 + \varepsilon}$. In each of the six branches where $u$ and $v$ are assigned different colours, the list of every vertex in $N(u) \cap N(v)$ has size at most $1$ after applying the reduction rules. Hence, each vertex in $S$ loses at least one colour, and therefore the number of vertices of list size $3$ decreases by at least $|S| \geq \mu^{1/3 + \varepsilon}$. Moreover, in the case that $u$ and $v$ are non-adjacent, by merging them, the number of vertices of list size $3$ decreases by one.
        \end{subproof}

        \begin{itemize}
            \item[{\textbf{B5}}] \itshape If $(G, L)$ is reduced, we create a number of new instances. 
        
            As in the statement of \cref{lem:magic-lemma}, fix $r \in V(H)$,  neighbours $n_r(a) \in N_{L_3}^{(2)}(r)$ for vertices $a \in N_{L_3}^{(3)}(r)$, and vertices $p_v(x) \in N_{L_3}(v) \cap N_{L_3}(x)$ for $v \in L_3$ and $x \in N_{L_3}^{(2)}(v)$. 

            First, for each pair of disjoint subsets $T, \widetilde{T} \subseteq L_3$ such that each has size at most $2 \mu^{2/3-\varepsilon}$ and $|N_{L_3}(N_{L_3}[T] \cap N_{L_3}[\widetilde{T}])| \geq  \mu/7$, we create six new branches: one for each pair of colours $(i, j) \in [3]^2$ such that $i \neq j$. In each of the six respective branches, assign each vertex in $T$ the colour $i$, and each vertex in $\widetilde{T}$ the colour $j$. These branches correspond to the second outcome of \cref{lem:magic-lemma}.

            Second, we create branches corresponding to the third outcome of \cref{lem:magic-lemma}. For each vertex $v \in N_{L_3}^{(3)}(r)$, we create a branch for each possible valid combination of a 3-colouring of $N_{L_3}[v]$, a 3-colouring of $\{ n_r(a)\, : \, a \in N_{L_3}(v)\}$, a set $S \subseteq N_{L_3}^{(2)}(v)$ of size at most $3\mu^{2/3 - \varepsilon}$ such that $\{x \in N_{L_3}^{(2)}(v) \, : \, p_v(x) \not \in L_3\setminus N^{(3)}(r)\} \subseteq S$, and each 3-colouring of $S$. In these branches, colour the vertices of \[
            N_{L_3}[v]\cup \{ n_r(a)\, : \, a \in N_{L_3}(v)\}\cup S
            \]
            according to the colourings defining the branch. This gives a partial colouring $\phi$. We extend $\phi$ to the remaining vertices $x \in N_{L_3}^{(2)}(v) \setminus S$ via $\phi(x):= \phi(n_r(p_v(x)))$.
        \end{itemize}
        \begin{claim}
            If branching rule B5 was applied, the number of instances is at most
            \[6\mu^{4\mu^{2/3-\varepsilon}} \cdot F\left(\frac{6\mu}{7}\right) + \mu^{8\mu^{2/3-\varepsilon}}\cdot F\left(4\mu^{2/3 + 2\varepsilon} \right) + p(n).\]
        \end{claim}
        \begin{subproof}
            We first consider the branches corresponding to the second outcome of \cref{lem:magic-lemma}. 
            For each valid choice of $T$ and $\widetilde{T}$, both of which need to be subsets of $L_3$ of size at most $2\mu^{2/3-\varepsilon}$, we create $6$ new instances, so the number of such branches is bounded by
            \[
           6 \cdot (\text{\# of choices for $T$}) \cdot (\text{\# of choices for $\widetilde{T}$}) \leq 6 \cdot \left(|L_3|^{2\mu^{2/3-\varepsilon}}\right)^2 = 6 \mu^{4\mu^{2/3-\epsilon }}.
            \]      
            Next, we consider the branches corresponding to the third outcome of \cref{lem:magic-lemma}. There are at most $\mu$ choices for $v \in N_{L_3}^{(3)}(r)$. Since the instance is reduced, $|N_{L_3}(v)| < \mu^{1/3 + \varepsilon}$ and so there are at most
        $3^{2\mu^{1/3+\varepsilon}}$ options for 3-colourings of $N_{L_3}[v]\cup \{n_r(a):a\in N_{L_3}(v)\}$. Since $S$ is chosen from $L_3$ of size at most $3\mu^{2/3-\varepsilon}$, there are at most $\mu^{3\mu^{2/3-\varepsilon}}$ choices for $S$. For each choice of $S$, there are at most $3^{3\mu^{2/3-\varepsilon}}$ possible 3-colourings of $S$. The number of instances that we create is hence at most
      \[
      3^{2\mu^{1/3+\varepsilon}} \cdot\mu^{3\mu^{2/3-\varepsilon}} \cdot 3^{3\mu^{2/3-\varepsilon}}\leq \mu^{8\mu^{2/3-\varepsilon}}.
      \]Here we used that $\varepsilon<\frac16$ and $\mu\geq 3$.
            The desired recurrence relation now follows from \cref{lem:size-of-L3-after-applying-B5}.
        \end{subproof}

     We prove by induction on $\mu$ that $F(\mu)\leq C\exp(\mu^{2/3-\varepsilon}(\log \mu)^2)$ for some constant $C=C(\varepsilon)>0$. By choosing the constant $C$ above and the constant $c$ in R4 appropriately, we may assume the claim holds for all $\mu'<\mu$ where $\mu$ is `sufficiently large'. 
     In particular, as $\varepsilon<1/6$, we may assume that $4\mu^{2/3+2\varepsilon}\leq (6/7)\mu$ and therefore when B5 is applied, the claim above and the induction hypothesis imply that the running time is at most
     \[
     7\exp\left(8\mu^{2/3-\varepsilon}\log\mu\right) \cdot F(6\mu/7) \leq 7C\exp\left(8\mu^{2/3-\varepsilon}\log \mu + (6\mu/7)^{2/3 -\varepsilon}(\log (6\mu/7))^2\right),
     \]
     which for $\mu$ sufficiently large is less than $C\exp(\mu^{2/3 - \varepsilon}(\log \mu)^2)$, giving the desired running time in the case that B5 is applied.
     For each of the branching rules B1-B4, using the claims above and the induction hypothesis, the running time may be bounded by
     \begin{align*}
         F(\mu - 1) &+ 6F(\mu - \mu^{1/3 + \varepsilon}/3)\\
         &\leq F(\mu)^{(1-1/\mu)^{2/3-\varepsilon}}+6F(\mu)^{(1-\mu^{(1/3+\epsilon)-1}/3)^{2/3-\varepsilon}}\\
         &\leq  F(\mu)^{1-1/(3\mu)}+6F(\mu)^{1-1/(9\mu^{2/3-\epsilon})}\\
         &\leq F(\mu)(\exp(-1/(3\mu)\cdot \mu^{2/3-\varepsilon}(\log \mu)^2)+6\exp(-1/(9\mu^{2/3-\epsilon})\cdot \mu^{2/3-\varepsilon}(\log \mu))\\
         &= F(\mu)(\exp(-1/3 \cdot \mu^{-1/3+\varepsilon}(\log \mu)^2)+6\exp(-1/9 \cdot (\log \mu)^2))\\
         &\leq F(\mu).
     \end{align*}
    In the second inequality, we used that for $x$ sufficiently small and $\varepsilon<\frac13$, $(1-x)^{2/3-\varepsilon}\leq 1-(2/3-\varepsilon)x\leq 1-x/3$. For the last step we use that for $\mu$ sufficiently large, $\exp(-1/3 \cdot \mu^{-1/3+\varepsilon}(\log \mu)^2)+6\exp(-1/9 \cdot (\log \mu)^2) < 1$. Namely, while the term $\exp(-1/3 \cdot \mu^{-1/3+\varepsilon}(\log \mu)^2)$ asymptotically approaches $1$ from below, the term $6\exp(-1/9 \cdot (\log \mu)^2)$ tends to $0$ asymptotically significantly faster, allowing the asymptotic behaviour to be dominated by the first term.

    Hence, we have proved that $F(\mu)\leq C\exp(\mu^{2/3-\varepsilon}(\log \mu)^2)$. During the reductions towards each instance, all steps are performed in time polynomial in $n$. Since $\mu\leq n$, for any $\varepsilon'>0$ there is a constant $C=C(\varepsilon,\varepsilon')$ such that the total running time will be at most $2^{Cn^{2/3-\varepsilon+\varepsilon'}}$, absorbing the additional polynomial factor. Since this holds for any $0<\varepsilon<1/33$ and $\varepsilon'>0$, we may obtain the running time of $2^{O(n^{2/3-\varepsilon})}$ for any $\varepsilon<1/33$, as desired.
    \end{proof}

\end{document}